\documentclass[10pt]{amsart}
\usepackage{amsmath,amsfonts,amssymb,color,graphics}
\usepackage[english]{babel}
\usepackage{enumerate, mathrsfs, xcolor, verbatim, tikz}
\usetikzlibrary{positioning, arrows.meta, calc}
\usepackage{geometry}
\usepackage[T1]{fontenc}
\usepackage[colorlinks=true,linkcolor=blue,citecolor=red,urlcolor=blue]{hyperref}

\tikzset{every node/.style={font=\normalsize}}

\def\build#1_#2^#3{\mathrel{\mathop{\kern 0pt#1}\limits_{#2}^{#3}}}

\DeclareMathOperator{\sat}{sat}

\newcommand{\Ac}{\mathcal{A}}

\numberwithin{equation}{section}

\def\iff{~if and only if~}

\newtheorem{theorem}{Theorem}[section]
\newtheorem{proposition}[theorem]{Proposition}
\newtheorem{lemma}[theorem]{Lemma}
\newtheorem{remark}[theorem]{Remark}
\newtheorem{example}[theorem]{Example}
\newtheorem{definition}[theorem]{Definition}
\newtheorem{assumption}[theorem]{Assumption}

\title[Rota-Baxter Operators in Discrete Control]{Study of Rota-Baxter Operators in Matrix $C^*$-Algebras Motivated by Toeplitz Structures, and Applications to Sliding Mode Control}
\author{Marwa Ennaceur}
\address{Department of Mathematics, College of Science,
         University of Ha'il, Ha'il 81451, Saudi Arabia}
\email{\tt mar.ennaceur@uoh.edu.sa}
\keywords{Rota-Baxter operators; matrix $C^*$-algebra; sliding mode control; 
discrete-time delay systems; Lyapunov stability; linear matrix inequality; 
Lie algebra; Toeplitz algebra; $\mathcal{L}_2$-gain; 
quasi-sliding mode; Bilinear Matrix Inequality}
\subjclass[2020]{16W99, 17B99, 46L05, 93D05, 93C55}
\begin{document}

\begin{abstract}
This paper studies Rota-Baxter operators on the matrix $C^*$-algebra $M_n(\mathbb{C})$,
motivated by the discrete Toeplitz algebra (whose role is purely heuristic; see
Remark~\ref{rem:toeplitz_scope}). We provide a structural classification of such
operators compatible with the $C^*$-norm, analyze their induced Lie brackets, and
apply them to deform system matrices in discrete-time delayed systems under sliding
mode control. Lyapunov-based Bilinear Matrix Inequality conditions, together with a
tractable linear reformulation via $Q=X^{-1}$, guarantee asymptotic stability on the
sliding manifold and $\mathcal{L}_2$-gain stability. The effective gain from
uncertainty $\delta$ to state $x$ is $\gamma/\sqrt{\mu}$ with
$\mu=\lambda_{\min}(-\mathcal{M})>0$ determined \emph{a posteriori}; minimizing
$\gamma$ alone does not minimize this bound, which holds under zero extended initial
conditions ($V_0=0$). We work under the standing assumption $m=n$ (square actuation);
a supplementary non-degenerate example with $m=1$, $n=2$ illustrates LMI feasibility
with $\Pi\neq0$. All algebraic results are proved directly in $M_n(\mathbb{C})$; no
infinite-dimensional reduction is used.
\end{abstract}
\maketitle
\tableofcontents

\section{Introduction}\label{sec:intro}
The discrete Heisenberg algebra arises as an associative algebra generated by elements
\(u, u^*, n\) subject to the relations
\[
[n, u] = -u, \quad [n, u^*] = u^*, \quad [u, u^*] = p,
\]
where \(p\) is the rank-one projection onto the vacuum state. Equipped with the
commutator \([x,y]=xy-yx\), this algebra inherits a Lie structure. Graded extensions
yield Lie superalgebras, broadening the theoretical scope \cite{Guo2012, Bai2019}.

A faithful representation on the Hilbert space \(\ell^2(\mathbb{N})\) identifies \(u\)
with the unilateral shift \(U\), \(u^*\) with its adjoint \(U^*\), and \(n\) with the
number operator \(N e_k = k e_k\). The norm closure of the generated algebra yields the
discrete Toeplitz \(C^*\)-algebra \cite{Douglas1998}. While this infinite-dimensional
setting provides a rich topological framework for spectral analysis and noncommutative
deformations \cite{Connes1994, Arveson1976}, practical control synthesis requires
finite-dimensional approximations. Specifically, we work with compressions of Toeplitz
operators to the subspace $\mathrm{span}\{e_0,\ldots,e_{n-1}\} \subset \ell^2(\mathbb{N})$;
the finite-dimensional algebra $M_n(\mathbb{C})$ serves as the concrete representation
algebra.

\begin{remark}[Scope of the Toeplitz motivation]\label{rem:toeplitz_scope}
The discrete Toeplitz $C^*$-algebra motivates the structural setting of this paper
but does not enter the proofs in an essential way, and its role is purely heuristic.
The Rota-Baxter identity \eqref{eq:RB} is imposed directly on linear maps
$P: M_n(\mathbb{C}) \to M_n(\mathbb{C})$ and reduces to an explicit system of quadratic
constraints on the matrix entries of $P$, verifiable by direct computation.

A rigorous reduction from the infinite-dimensional Toeplitz algebra to $M_n(\mathbb{C})$
would require the invariance of the subspace $\mathrm{span}\{e_0,\ldots,e_{n-1}\}$
under all operators in the range of $P$. This condition fails already for the most
natural generator: the unilateral shift $U$ maps $e_{n-1}$ to $e_n$, which lies outside
$\mathrm{span}\{e_0,\ldots,e_{n-1}\}$, so the subspace is not invariant under $U$.
Consequently, no Rota-Baxter identity from the Toeplitz algebra compresses to one in
$M_n(\mathbb{C})$ via this subspace, and any such reduction would additionally require
spectral tools (Wiener--Hopf factorization, Fredholm index) that are not developed here.
This extension is deferred to future work. All algebraic results in
Sections~\ref{sec:classification}--\ref{sec:spectral} are proved entirely within
$M_n(\mathbb{C})$; the Toeplitz algebra is cited only as the source of structural
intuition.
\end{remark}

Within this setting, Rota-Baxter operators play a central role. A linear map
\(P:\Ac\to\Ac\) satisfies the Rota-Baxter identity of weight \(\lambda\in\mathbb{R}\):
\begin{equation}\label{eq:RB}
P(x)P(y) = P(x P(y)) + P(P(x) y) + \lambda P(x y), \quad \forall\, x,y\in\Ac.
\end{equation}
Originally introduced to formalize integration and summation \cite{Guo2009},
Rota-Baxter operators also generate solutions to classical and modified Yang-Baxter
equations, thereby linking associative and Lie structures \cite{EbrahimiFard2008}. In
operator algebras, they induce algebraic deformations that can model cumulative effects,
filtering, or memory-dependent interactions.

In control theory, sliding mode control (SMC) is renowned for its robustness against
matched uncertainties and time delays \cite{Utkin1992, Edwards1998}. We consider
discrete-time delayed systems of the form
\begin{equation}\label{eq:system}
\begin{cases}
  x(k+1) = A x(k) + A_d x(k - \tau) + B u(k) + D \delta(k), \\
  s(k) = C x(k), \\
  u(k) = -K x(k) - \rho \, \sat\bigl(s(k)/\phi\bigr),
\end{cases}
\end{equation}
where \(x(k)\in\mathbb{R}^n\), \(u(k)\in\mathbb{R}^m\), $B \in \mathbb{R}^{n \times m}$,
$D \in \mathbb{R}^{n \times p}$, \(\tau\in\mathbb{N}\) is a known delay,
\(\delta(k)\in\mathbb{R}^p\) is a bounded uncertainty satisfying
\(\|\delta(k)\|_2 \leq \delta_{\max}\) for all $k$, and \(\sat(\cdot)\) denotes the
componentwise saturation function
\[
  \sat(v/\phi)_i = \begin{cases} v_i/\phi & \text{if } |v_i| \leq \phi, \\
  \mathrm{sgn}(v_i) & \text{if } |v_i| > \phi, \end{cases}
\]
with boundary layer width \(\phi>0\) to mitigate chattering in discrete time.

We assume throughout that \(m = n\) (the square case) and \(p = m\), i.e.,
$C \in \mathbb{R}^{m \times n}$ is a square matrix. This ensures that
$CB_P \in M_m(\mathbb{C})$ is a square matrix, and its invertibility
(Assumption~\ref{ass:invert}) is well-defined in the ordinary sense without recourse to
pseudoinverses. The hypothesis $p = m$ is adopted for notational convenience: it ensures
that the congruence transformation $\Sigma = \mathrm{diag}(Q, Q, I_p, I_n)$ in
Remark~\ref{rem:LMI_linear} produces a well-structured LMI with consistent block
dimensions. Physically, $p = m$ means the number of disturbance channels equals the
number of control inputs, a structural simplification that can be relaxed at the cost of
a more complex block structure in \eqref{eq:LMI_linear}. The case \(m < n\) (rectangular
\(B\)) or \(p < m\) (fat output map \(C\)) requires a separate treatment involving either
right-inverses or a reformulation of the equivalent control, and is deferred to future
work. The supplementary non-degenerate numerical example of Section~\ref{sec:nondeg}
relaxes this assumption to $m=1$, $n=2$ for purely illustrative purposes.

\begin{assumption}[Bounded initial conditions]\label{ass:bounded_init}
The initial state and initial delay segment are bounded:
$\sup_{k\geq 0}\|x(k)\|\leq r_0 < \infty$ and
$\sup_{k\geq 0}\|x(k-\tau)\|\leq r_{d,0}<\infty$ prior to closed-loop activation.
This holds whenever the open-loop system is BIBO stable and the initial data
$x(0)$ and $\{x(-\tau),\ldots,x(-1)\}$ are finite. In practice, $r_0$ and $r_{d,0}$
are set conservatively from the known initial conditions; they are treated as design
parameters in the reaching analysis of Section~\ref{sec:smc}.

\textbf{Important note on the design order.} The bound $r_0$ is fixed from open-loop
data before the control gains $K$ and $\rho$ are selected. However, the validity of
$r_0$ as a uniform bound throughout the closed-loop reaching phase is a condition that
is \emph{verified a posteriori} after $K$ is chosen, via condition
\eqref{eq:r0_sufficient}. There is therefore no strict \emph{a priori} non-circularity:
one must first fix $r_0$ and $\rho_{\max}$ from open-loop data, then choose $K$
satisfying \eqref{eq:K_strong}, and only then verify \eqref{eq:r0_sufficient}. If the
condition fails, $r_0$ must be increased (conservatively) or $K$ must be redesigned.
The full sequential procedure is spelled out in Remark~\ref{rem:design}.

When the open-loop system is not BIBO stable, this assumption must be imposed explicitly
as a constraint on the initial data.
\end{assumption}

All operator norms appearing in the SMC analysis
(Sections~\ref{sec:smc}--\ref{sec:stability}) are spectral (operator) norms
$\|\cdot\|_{\mathrm{op}} = \sigma_{\max}(\cdot)$, unless otherwise stated. The Frobenius
norm $\|\cdot\|_F$ appears only in the structural analysis of $P_+$ in
Section~\ref{sec:classification} (Example~\ref{ex:RB_Mn}(b) and
Remark~\ref{rem:friedrichs}), where it is always labeled explicitly.

\textbf{A non-standard norm requirement.} Throughout the SMC analysis, we impose
condition \eqref{eq:K_strong}: $\|A_P - B_PK\|_{\mathrm{op}} < 1$. This is \emph{strictly
stronger} than Schur stability (spectral radius $\rho(A_P-B_PK) < 1$), and is not
automatically satisfied even when $A_P - B_PK$ is Schur stable. Its necessity is
discussed in Remark~\ref{rem:alpha0_validity}; operator-specific guidance is given in
Remarks~\ref{rem:K_strong_scalar} and~\ref{rem:K_strong_Pplus}.

While the infinite-dimensional Toeplitz algebra inspires the structural properties of
memory operators (see Remark~\ref{rem:toeplitz_scope}), the control application is
carried out within the finite-dimensional matrix $C^*$-algebra \(\Ac = M_n(\mathbb{C})\).
By applying a Rota-Baxter operator \(P:\Ac\to\Ac\), we obtain deformed dynamics
\(A_P=P(A)\), \(A_{d,P}=P(A_d)\), \(B_P=P(B)\). This deformation preserves the
algebraic framework while introducing structured weighting. The weight \(\lambda\) governs
the algebraic nature of the deformation: \(\lambda=0\) corresponds to integration-like
behavior (splitting of the algebra), while \(\lambda\neq 0\) models persistent algebraic
feedback.

Our contributions are threefold:
\begin{enumerate}
  \item A structural characterization of Rota-Baxter operators on \(M_n(\mathbb{C})\),
        with explicit finite-dimensional examples and compatibility conditions for induced
        Lie brackets, together with a direct justification of the finite-dimensional setting.
  \item A rigorous Lyapunov-based stability analysis for delayed sliding mode systems under
        Rota-Baxter deformation, yielding a BMI condition via the Schur complement lemma,
        along with a fully linear reformulation via the congruence transformation $Q = X^{-1}$.
  \item An algebraic characterization of sliding surface invariance and spectral properties
        of the reduced dynamics, under explicit non-degeneracy and commutativity assumptions
        on \(P\), whose validity is discussed for the concrete examples provided.
\end{enumerate}

The remainder of the paper is organized as follows. Section~\ref{sec:preliminaries}
introduces the necessary algebraic and topological preliminaries.
Section~\ref{sec:classification} establishes structural properties of Rota-Baxter
operators, including explicit finite-dimensional examples. Section~\ref{sec:smc}
formulates the sliding mode control problem with Rota-Baxter deformation.
Section~\ref{sec:stability} presents the main stability theorem and matrix inequality
conditions. Section~\ref{sec:spectral} analyzes spectral and algebraic properties.
Section~\ref{sec:numerical} provides a numerical illustration, including a non-degenerate
example with $m=1$, $n=2$. Concluding remarks are given in Section~\ref{sec:conclusion}.

\begin{figure}[h!]
\centering
\begin{tikzpicture}[
  block/.style = {draw, thick, minimum height=1.2cm, minimum width=3.2cm, align=center},
  arrow/.style = {thick, ->},
  dashedblock/.style = {draw, dashed, thick, minimum height=1.4cm, minimum width=4.6cm, align=center}
]
\node[block] (system) {Delayed System\\$x(k{+}1) = A x(k) + A_d x(k{-}\tau) + B u(k) + D \delta(k)$};
\node[block, right=3.5cm of system] (RB) {Rota-Baxter\\Deformation\\$(A, A_d, B) \mapsto (A_P, A_{d,P}, B_P)$};
\node[block, below=2.5cm of system] (sliding) {Sliding Mode\\$s(k) = C x(k)$\\$u(k) = -K x(k) - \rho\, \sat(s(k)/\phi)$};
\node[block, below=2.5cm of RB, minimum width=4.2cm] (lyap) {Lyapunov Function\\$V_k = x(k)^\top X x(k)$\\$+\displaystyle\sum_{i=1}^\tau x(k{-}i)^\top Y x(k{-}i)$};
\node[dashedblock, below=2.5cm of sliding] (LMI) {BMI / Linear Analysis\\$\Delta V(x_k) < 0$\\Stability Conditions};

\draw[arrow] (system) -- (RB);
\draw[arrow] (system) -- (sliding);
\draw[arrow] (RB) -- (lyap);
\draw[arrow] (sliding) -- (LMI);
\draw[arrow] (lyap) -- (LMI);
\draw[arrow] (lyap.west) -- ++(-1.5,0) node[left]{Feedback} |- (sliding.north);

\node[below=0.5cm of LMI, align=center, font=\small] (manifold) {\textbf{Sliding Manifold:} $s(k)=0$ \\ Equivalent Dynamics};
\end{tikzpicture}
\caption{Conceptual framework: Rota-Baxter operators in sliding mode control of delayed
systems under uncertainty.}
\label{fig:framework}
\end{figure}

\section{Preliminaries}\label{sec:preliminaries}
This section introduces the fundamental algebraic and topological structures underlying
our study.

\subsection{Associative algebras and Lie structures}
An associative algebra \(\Ac\) over a field \(\mathbb{K}\) is a vector space equipped
with a bilinear product \(\cdot : \Ac \times \Ac \to \Ac\) satisfying
$(x \cdot y) \cdot z = x \cdot (y \cdot z)$ for all \(x,y,z \in \Ac\).

\begin{definition}
Given an associative algebra \(\Ac\), the commutator
\[
[x, y] := x y - y x
\]
endows \(\Ac\) with a Lie algebra structure \(\mathfrak{g}\), satisfying bilinearity,
antisymmetry, and the Jacobi identity \cite{Humphreys1972}.
\end{definition}

\begin{remark}
Graded extensions of this framework yield Lie superalgebras
\(\mathfrak{g} = \mathfrak{g}_{\bar{0}} \oplus \mathfrak{g}_{\bar{1}}\). While not
explicitly exploited in the finite-dimensional control synthesis below, this structure
provides a natural setting for future work on supersymmetric control and fermionic memory
models \cite{Kac1977}.
\end{remark}

\subsection{Topological and \(C^*\)-algebraic structures}
\begin{definition}
A \(C^*\)-algebra is a complex Banach algebra \((\Ac, \|\cdot\|)\) with an involution
\(*\) such that \(\|x^* x\| = \|x\|^2\) for all \(x \in \Ac\).
\end{definition}

In control applications, we primarily work with the finite-dimensional \(C^*\)-algebra
\(M_n(\mathbb{C})\) equipped with the operator norm and Hermitian adjoint. This algebra
naturally accommodates system matrices, Lyapunov inequalities, and Rota-Baxter
deformations.

\begin{example}
The discrete Toeplitz \(C^*\)-algebra generated by the unilateral shift on
\(\ell^2(\mathbb{N})\) motivates the structural setting (see
Remark~\ref{rem:toeplitz_scope}), but its role is purely heuristic. For finite-dimensional
state spaces, we work with \(M_n(\mathbb{C})\) directly. The Rota-Baxter identity
\eqref{eq:RB} in $M_n(\mathbb{C})$ reduces to a system of quadratic constraints on the
entries of the matrix representing the linear map $P$, which can be verified
computationally. We impose \eqref{eq:RB} directly on $M_n(\mathbb{C})$ without appealing
to any infinite-dimensional reduction.
\end{example}

\subsection{Rota-Baxter operators and compatible derivations}
\begin{definition}
Let \(\lambda \in \mathbb{K}\). A \emph{Rota-Baxter operator} of weight \(\lambda\) on
an algebra \(\Ac\) is a linear map \(P : \Ac \to \Ac\) satisfying \eqref{eq:RB}.
\end{definition}

Such operators generalize integration and summation. For \(\lambda=0\), the identity
resembles the integration by parts formula. For \(\lambda \neq 0\), it relates to
summation operators; note that the scaling operator $P(X) = -\lambda X$
(Example~\ref{ex:RB_Mn}(a)) gives a non-trivial instance for every $\lambda \neq 0$,
while the zero map $P = 0$ is the only Rota-Baxter operator of weight $0$ of this
scaling form.

\begin{remark}[Rota-Baxter operators of weight zero]\label{rem:RB_weight_zero}
For weight $\lambda = 0$, the identity \eqref{eq:RB} reduces to
$P(x)P(y) = P(xP(y)) + P(P(x)y)$ for all $x,y \in \Ac$.
A standard family of non-trivial examples is provided by operators whose image is a
two-sided ideal. Specifically, if $P^2 = P$ is an idempotent whose image $P(\Ac)$ is
a two-sided ideal of $\Ac$ satisfying $P(\Ac)\cdot(I-P)(\Ac) \subset \ker P$, then $P$
is a Rota-Baxter operator of weight $0$; see \cite{BaiGuoNi2011} for a proof.

We caution that not every idempotent is a Rota-Baxter operator of weight zero. In
particular, the Peirce corner map $P(X) = eXe$ (for a minimal idempotent
$e = e_{11} \in M_n(\mathbb{C})$, $e^2 = e$) is \emph{not} a Rota-Baxter operator
of weight zero in general. Indeed, a direct computation gives:
\[
  P(x)P(y) = (exe)(eye) = exeye,
\]
\[
  P(xP(y)) + P(P(x)y) = e(x\cdot eye)e + e(exe\cdot y)e = exeye + exeye = 2\,exeye,
\]
so the identity $P(x)P(y) = P(xP(y)) + P(P(x)y)$ requires $exeye = 0$ for all
$x, y \in \Ac$. For the minimal idempotent $e = e_{11}$, the corner algebra
$e_{11}M_n(\mathbb{C})e_{11} = \mathbb{C}\, e_{11} \cong \mathbb{C}$, whose product
is non-zero (e.g.\ $e_{11}\cdot e_{11} = e_{11} \neq 0$), so the condition $exeye=0$
fails for $x = y = I$. Thus $P(X)=eXe$ is not a Rota-Baxter operator of weight zero
for any non-zero minimal idempotent $e$ in $M_n(\mathbb{C})$ with $n\geq 1$.

The erroneous claim that $P(X)=eXe$ always satisfies \eqref{eq:RB} for $\lambda=0$
stems from an arithmetic oversight: each of the two right-hand side terms equals
$exeye$, giving a sum of $2\,exeye \neq exeye$ whenever $exeye \neq 0$.
The correct condition for $P(X)=eXe$
to be a Rota-Baxter operator of weight zero is that $e\Ac e$ is a null algebra (every
product in $e\Ac e$ is zero). For a minimal idempotent $e=e_{11}$ in $M_n(\mathbb{C})$,
the corner algebra $e\Ac e \cong \mathbb{C}$ is not a null algebra, so this condition
is never satisfied for non-zero minimal idempotents.

A correct non-trivial example of a weight-zero Rota-Baxter operator in $M_n(\mathbb{C})$
is given by any idempotent $P$ whose image is a two-sided ideal, as described above and
in \cite{BaiGuoNi2011}.
\end{remark}

\begin{definition}
A derivation \(D : \Ac \to \Ac\) satisfies the Leibniz rule \(D(xy) = D(x)y + x D(y)\).
We say \(D\) is \emph{compatible} with \(P\) if the commutator
\([D, P] := D\circ P - P\circ D\) satisfies a prescribed algebraic constraint
(e.g., \([D,P]=0\)).
\end{definition}

\begin{theorem}[Compatibility Criterion {\cite[Theorem~3.1]{Bai2023}}]\label{thm:compat}
If \(P\) is a Rota-Baxter operator and \(D\) a derivation such that
\(D\circ P = P\circ D\), then \(D\) preserves the Rota-Baxter identity and maps the
image of \(P\) into itself.
\end{theorem}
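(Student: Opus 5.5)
The plan is to make the informal phrase ``$D$ preserves the Rota-Baxter identity'' precise and then verify both claims by direct computation from the Leibniz rule, the commutation $D\circ P=P\circ D$, and linearity of $P$ and $D$. I will introduce the \emph{Rota-Baxter defect}
\[
R(x,y) := P(x)P(y) - P(xP(y)) - P(P(x)y) - \lambda P(xy), \qquad x,y\in\Ac,
\]
so that $P$ is a Rota-Baxter operator of weight $\lambda$ exactly when $R\equiv 0$. I will interpret ``$D$ preserves the identity'' as the derivation-type rule
\[
D\bigl(R(x,y)\bigr) = R(Dx,y) + R(x,Dy) \qquad \text{for all } x,y\in\Ac .
\]
This rule says that differentiating the identity \eqref{eq:RB} again yields instances of \eqref{eq:RB}. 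In particular, $R\equiv 0$ forces $D(R(x,y))=0$, and the identity persists along the one-parameter family $\exp(tD)$ of automorphisms commuting with $P$.

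The key computation treats each of the four terms of $R$ in turn.
\begin{itemize}
\item First term: by Leibniz and commutation, $D(P(x)P(y)) = P(Dx)P(y) + P(x)P(Dy)$.
\item Second term: $D(P(xP(y))) = P(D(xP(y))) = P(Dx\,P(y)) + P(x\,P(Dy))$.
\item Third term, symmetrically: $D(P(P(x)y)) = P(P(Dx)\,y) + P(P(x)\,Dy)$.
\item Fourth term: $D(\lambda P(xy)) = \lambda P(Dx\,y) + \lambda P(x\,Dy)$.
\end{itemize}
Collecting the terms that contain $Dx$ gives exactly $R(Dx,y)$, and those that contain $Dy$ give $R(x,Dy)$, which proves the displayed rule.

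For the second assertion, take $z=P(x)$ in the image of $P$. Then $D(z)=D(P(x))=P(D(x))\in P(\Ac)$, so $D(P(\Ac))\subset P(\Ac)$. As a remark, I will also note that $D$ then restricts to a derivation of the subalgebra $P(\Ac)$. That $P(\Ac)$ is a subalgebra follows from \eqref{eq:RB} itself, since $P(x)P(y)$ is written as $P$ applied to something.

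The only genuine obstacle is interpretive rather than technical. The statement is cited from \cite{Bai2023} without a precise formulation of ``preserves'', so I will fix the defect-based meaning above and state it explicitly. Once that choice is made, the bookkeeping of the commutation step is routine: every occurrence of $D$ must pass through $P$, including inside the nested terms $P(xP(y))$, where commutation is used twice.
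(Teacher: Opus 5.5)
The paper does not prove this statement. It is quoted from \cite[Theorem~3.1]{Bai2023}, the phrase ``preserves the Rota-Baxter identity'' is never defined, and the result is not used later. Your argument is therefore a self-contained substitute rather than a reconstruction. What it gains is a precise statement and the observation that everything follows from the Leibniz rule and $DP=PD$.

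Your four-term computation is correct. So are $D(P(\Ac))=P(D(\Ac))\subset P(\Ac)$ and the subalgebra remark.

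Be aware, though, that your chosen reading says almost nothing. The rule $D(R(x,y))=R(Dx,y)+R(x,Dy)$ holds for \emph{every} linear $P$ commuting with $D$; the Rota-Baxter hypothesis never enters. Once $R\equiv 0$, both sides vanish, so ``$R\equiv 0$ forces $D(R(x,y))=0$'' is just $D(0)=0$. The $\exp(tD)$ remark is equally empty, and it also needs a setting where the exponential converges, which a general field $\mathbb{K}$ does not provide.

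A formulation in which the Rota-Baxter structure actually appears uses the descendent product $x\star_P y:=P(x)y+xP(y)+\lambda xy$. This product is associative when $P$ satisfies \eqref{eq:RB}, it satisfies $P(x\star_P y)=P(x)P(y)$, and its commutator is exactly the bracket $[x,y]_P$ of Lemma~\ref{lem:lie}. The same bookkeeping you already did gives $D(x\star_P y)=(Dx)\star_P y+x\star_P(Dy)$. Hence $D$ is a derivation of $(\Ac,\star_P)$ and of $(\Ac,[\cdot,\cdot]_P)$, and $P$ is an algebra homomorphism $(\Ac,\star_P)\to(\Ac,\cdot)$ intertwining $D$ with itself. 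This version costs nothing extra in the proof, so it is the one worth stating.
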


\subsection{Interrelations and motivation}
The interplay between \(C^*\)-topology, Lie brackets, and Rota-Baxter deformations
provides a rigorous framework for modeling memory-dependent dynamics. In the
finite-dimensional setting $\Ac = M_n(\mathbb{C})$, every linear map $P$ is automatically
continuous (since all linear maps on finite-dimensional normed spaces are bounded), so no
additional boundedness hypothesis is required for the control synthesis below. The
Rota-Baxter identity reduces to a system of quadratic constraints on the entries of the
matrix representing $P$ \cite{BenGeloun2021}.

\section{Classification and Structural Properties}\label{sec:classification}
We now establish structural results for Rota-Baxter operators on \(M_n(\mathbb{C})\).

\begin{lemma}[Boundedness and Continuity]\label{lem:bounded}
Let \(\Ac\) be a unital \(C^*\)-algebra and \(P:\Ac\to\Ac\) a Rota-Baxter operator of
weight \(\lambda\).
\begin{enumerate}[(i)]
  \item If $\Ac$ is infinite-dimensional, the Rota-Baxter identity \eqref{eq:RB} alone
        does not force $P$ to be bounded. Boundedness must be assumed separately; once
        assumed, continuity follows immediately (equivalence of boundedness and continuity
        for linear operators on normed spaces).
  \item If \(\Ac = M_n(\mathbb{C})\), then every linear map \(P:\Ac\to\Ac\) is
        automatically bounded and continuous, since all norms on a finite-dimensional
        space are equivalent and every linear map on a finite-dimensional normed space
        is continuous.
\end{enumerate}
\end{lemma}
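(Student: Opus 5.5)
Part (ii) is the substantive finite-dimensional statement, and part (i) needs a counterexample showing that the Rota-Baxter identity \eqref{eq:RB} does not imply boundedness.

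\textbf{Part (ii).} Fix a basis $\{E_{ij}\}$ of $M_n(\mathbb{C})$, e.g.\ the matrix units. For $X=\sum x_{ij}E_{ij}$, linearity gives
\[
\|P(X)\|\le \Bigl(\max_{i,j}\|P(E_{ij})\|\Bigr)\sum_{i,j}|x_{ij}|.
\]
Since all norms on the $n^2$-dimensional space $M_n(\mathbb{C})$ are equivalent, $\sum|x_{ij}|\le c\|X\|_{\mathrm{op}}$ for some constant $c>0$. Hence $P$ is bounded with $\|P\|\le c\max\|P(E_{ij})\|$. For linear maps, boundedness is equivalent to Lipschitz continuity, since $\|P(X)-P(Y)\|\le\|P\|\,\|X-Y\|$. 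No use of \eqref{eq:RB} is needed here.

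\textbf{Part (i), the equivalence claim.} The statement that boundedness implies continuity for linear operators is the same Lipschitz estimate as above. The converse direction follows from continuity at $0$ by the usual scaling argument.

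\textbf{Part (i), the main obstacle.} The real content is exhibiting an unbounded Rota-Baxter operator on an infinite-dimensional unital $C^*$-algebra. I would use weight $\lambda$ and the splitting construction: if $\Ac=\Ac_1\dotplus\Ac_2$ as vector spaces, with $\Ac_1,\Ac_2$ subalgebras, then $P=-\lambda\pi_1$ is a Rota-Baxter operator of weight $\lambda$. Here $\pi_1$ is the (possibly discontinuous) projection onto $\Ac_1$ along $\Ac_2$, and the verification is a standard computation on $x=x_1+x_2$, $y=y_1+y_2$.

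To make $\pi_1$ unbounded, I would try $\Ac=C[0,1]$ or $\ell^\infty$. Take a dense, non-closed subalgebra $\Ac_2$ of finite codimension-type behaviour, or kernels of discontinuous characters; but $C^*$-algebras have no discontinuous characters, so this route fails.

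The first thing I would actually try is weight $0$ instead: pick a discontinuous linear functional $f$ on $\Ac$ (via a Hamel basis) and a nonzero element $a$ with $a^2=0$, for instance in $M_2\otimes\Ac$ or $M_2(C[0,1])$. Then set $P(x)=f(x)a$. Both sides of \eqref{eq:RB} with $\lambda=0$ can be checked: the left side is $f(x)f(y)a^2=0$, while the right side is $f(x)f(y a)a+f(x a)f(y)a$. This vanishes if we additionally require $f$ to kill $\Ac a$ and $a\Ac$. That is arranged by choosing $f$ discontinuous on a complement of the subspace $\Ac a+a\Ac$, which is possible provided that complement is infinite-dimensional; this holds, e.g., for $a=e_{12}\otimes 1$ in $M_2(C[0,1])$.

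Since $f$ is unbounded and $a\ne0$, $P$ is unbounded. This shows that \eqref{eq:RB} alone does not force boundedness, completing (i).
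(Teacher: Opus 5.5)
Your proposal is correct. For part (ii) it is the same argument as the paper's, which consists only of the one-line appeal to finite-dimensionality written into the statement. Your matrix-unit estimate makes it explicit; you can take $c=n^2$, since $|x_{ij}|\le\|X\|_{\mathrm{op}}$.

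The real difference is in part (i). The paper gives no proof there: it only asserts that \eqref{eq:RB} does not force boundedness. You actually produce a witness, and it checks out. For $a=e_{12}$ in $M_2(C[0,1])$ one has $\mathcal{A}a+a\mathcal{A}=\{x: x_{21}=0\}$, so your operator is $P(x)=\phi(x_{21})\,e_{12}$ with $\phi$ a discontinuous functional on $C[0,1]$. Both sides of the weight-zero identity vanish identically, and $\|P(ge_{21})\|=|\phi(g)|$ while $\|ge_{21}\|=\|g\|_\infty$. There is one harmless slip: $P(P(x)y)=f(x)f(ay)a$, not $f(x)f(ya)a$. Since you make $f$ kill both $\mathcal{A}a$ and $a\mathcal{A}$, nothing changes.

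Your construction only covers weight $\lambda=0$. For $\lambda\neq0$, once $f$ kills $\mathcal{A}a+a\mathcal{A}$, the extra term $\lambda f(xy)a$ forces $f$ to vanish on $\mathcal{A}\mathcal{A}=\mathcal{A}$, so $P=0$. If you read (i) as a claim for the given weight $\lambda$, you need a second example.

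The splitting route you set aside does work once you leave the commutative setting. In the same algebra, let $\chi$ be any linear functional on $C[0,1]$ and set $\mathcal{A}_2=\{ge_{21}+\chi(g)e_{11}: g\in C[0,1]\}$. This is a subalgebra: the product of the elements indexed by $g$ and $h$ is the element indexed by $\chi(h)g$, because $e_{21}e_{11}=e_{21}$ and $e_{11}e_{21}=e_{21}e_{21}=0$. It is also an algebraic complement of the upper triangular subalgebra. Let $\pi$ be the projection onto the upper triangular part along $\mathcal{A}_2$. Then $\pi(ge_{21})=-\chi(g)e_{11}$, so $\pi$ is unbounded when $\chi$ is discontinuous. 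By the computation of Remark~\ref{rem:RB_Pplus}, $\pi$ has weight $-1$, so $-\lambda\pi$ is an unbounded Rota-Baxter operator of weight $\lambda$ for every $\lambda\neq0$. Together with your example, this covers every weight.
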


\begin{lemma}[Induced Lie Structure]\label{lem:lie}
Let \(P\) be a Rota-Baxter operator of weight \(\lambda\) on an associative algebra
\(\Ac\). Define the bracket
\begin{equation}\label{eq:bracket}
[x,y]_P := [P(x), y] + [x, P(y)] + \lambda [x,y].
\end{equation}
Then \((\Ac, [\cdot,\cdot]_P)\) is a Lie algebra.
\end{lemma}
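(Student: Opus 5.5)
The plan is to reduce the Jacobi identity for $[\cdot,\cdot]_P$ to the ordinary Jacobi identity for the commutator, using one intermediate identity. That identity says that $P$ is a Lie algebra homomorphism from $(\Ac,[\cdot,\cdot]_P)$ to $(\Ac,[\cdot,\cdot])$:
\begin{equation*}
[P(x),P(y)] \;=\; P\bigl([x,y]_P\bigr)\qquad\text{for all } x,y\in\Ac .
\end{equation*}
Bilinearity of $[\cdot,\cdot]_P$ is immediate from linearity of $P$ and bilinearity of the commutator. Antisymmetry is also immediate: swapping $x$ and $y$ in \eqref{eq:bracket} exchanges the first two terms and negates each commutator.

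To get the homomorphism identity, I write \eqref{eq:RB} for the pair $(x,y)$ and for the pair $(y,x)$, and subtract. Using linearity of $P$, the left-hand sides give $P(x)P(y)-P(y)P(x)=[P(x),P(y)]$. The right-hand sides give
\begin{equation*}
P\bigl(xP(y)-P(y)x\bigr)+P\bigl(P(x)y-yP(x)\bigr)+\lambda P(xy-yx).
\end{equation*}
This is exactly $P\bigl([x,P(y)]+[P(x),y]+\lambda[x,y]\bigr)=P([x,y]_P)$. So $P$ is a Rota-Baxter operator of weight $\lambda$ on the Lie algebra $(\Ac,[\cdot,\cdot])$. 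The sign and placement of $\lambda$ in \eqref{eq:RB} match those in \eqref{eq:bracket}, so no rescaling is needed.

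For the Jacobi identity, I expand $[[x,y]_P,z]_P$ by the definition:
\begin{equation*}
[[x,y]_P,z]_P=[P([x,y]_P),z]+[[x,y]_P,P(z)]+\lambda[[x,y]_P,z].
\end{equation*}
In the first term I replace $P([x,y]_P)$ by $[P(x),P(y)]$, and in the other two terms I expand $[x,y]_P$ by its definition. This yields seven iterated commutators, which I sort by how many of the three arguments carry a $P$:
\begin{itemize}
\item two $P$'s: $[[Px,Py],z]$, $[[Px,y],Pz]$, $[[x,Py],Pz]$;
\item one $P$, coefficient $\lambda$: $[[x,y],Pz]$, $[[Px,y],z]$, $[[x,Py],z]$;
\item no $P$, coefficient $\lambda^2$: $[[x,y],z]$.
\end{itemize}
I then take the cyclic sum over $(x,y,z)$. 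In the two-$P$ group, the cyclic sum collects, for each of the triples $(Px,Py,z)$, $(Px,y,Pz)$ and $(x,Py,Pz)$, exactly the three cyclic terms of the ordinary Jacobi sum for that triple, so each of these sums vanishes. The same happens for the one-$P$ group, with the triples $(Px,y,z)$, $(x,Py,z)$ and $(x,y,Pz)$. The $\lambda^2$ group is the plain Jacobi sum of $(x,y,z)$ and also vanishes. Hence the full cyclic sum is zero.

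The only step needing care is this bookkeeping: checking that after cyclic permutation each triple appears in all three cyclic positions with matching coefficients. I would verify it by listing the nine two-$P$ terms and the nine one-$P$ terms explicitly, and matching each of them to a cyclic Jacobi sum. Everything else is formal. Associativity of $\Ac$ is used only to know that the commutator satisfies the Jacobi identity (Definition in Section~\ref{sec:preliminaries}), and no $C^*$-structure is needed.
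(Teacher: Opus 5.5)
Your proposal is correct and follows essentially the same route as the paper. Both arguments antisymmetrize \eqref{eq:RB} to obtain $P([x,y]_P)=[P(x),P(y)]$, expand $[[x,y]_P,z]_P$ into seven iterated commutators, and cancel the cyclic sum group by group (two-$P$, one-$P$ and $\lambda^2$ terms) using ordinary Jacobi identities; your partition of the nine two-$P$ terms is exactly the paper's Step~4. The only cosmetic difference is in the one-$P$ group: the paper applies Jacobi to $(P(x)+\lambda x,y,z)$ and sums cyclically, which reduces to the same three Jacobi sums for $(P(x),y,z)$, $(x,P(y),z)$ and $(x,y,P(z))$ that you use directly.
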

\begin{proof}
\textbf{Antisymmetry.} Using antisymmetry of the ordinary commutator,
\[
[y,x]_P = [P(y),x]+[y,P(x)]+\lambda[y,x]
         = -[x,P(y)]-[P(x),y]-\lambda[x,y]
         = -[x,y]_P.
\]

\textbf{Jacobi identity.} We verify
\[
[[x,y]_P,z]_P + [[y,z]_P,x]_P + [[z,x]_P,y]_P = 0.
\]
The key algebraic ingredient is obtained by writing the Rota-Baxter identity
\eqref{eq:RB} for the pair $(x,y)$ and for the pair $(y,x)$:
\begin{align*}
P(x)P(y) &= P(xP(y)) + P(P(x)y) + \lambda P(xy),\\
P(y)P(x) &= P(yP(x)) + P(P(y)x) + \lambda P(yx).
\end{align*}
Subtracting the second from the first and using $[a,b] = ab - ba$:
\begin{align*}
[P(x),P(y)]
  &= P(xP(y)) - P(yP(x)) + P(P(x)y) - P(P(y)x)
     + \lambda P(xy) - \lambda P(yx)\\
  &= P([x,P(y)]) + P([P(x),y]) + \lambda P([x,y]).
\end{align*}
Rearranging yields the identity
\begin{equation}\label{eq:RB_comm}
P\bigl([P(x),y]\bigr) + P\bigl([x,P(y)]\bigr)
  = \bigl[P(x),P(y)\bigr] - \lambda P\bigl([x,y]\bigr),
\end{equation}
which we use below. Set $u := [x,y]_P = [P(x),y]+[x,P(y)]+\lambda[x,y]$.

\textbf{Step 1: Compute $P(u)$.}
By linearity of $P$ and identity \eqref{eq:RB_comm}:
\begin{align*}
P(u)
  &= P([P(x),y]) + P([x,P(y)]) + \lambda P([x,y]) \\
  &= \bigl([P(x),P(y)] - \lambda P([x,y])\bigr) + \lambda P([x,y]) \\
  &= [P(x),P(y)].
\end{align*}
Thus $P([x,y]_P) = [P(x),P(y)]$, i.e.\ $P$ maps the $[\cdot,\cdot]_P$-bracket to the
ordinary commutator of the $P$-images.

\textbf{Step 2: Expand $[[x,y]_P,z]_P$.}
By definition \eqref{eq:bracket} with $u=[x,y]_P$:
\begin{align*}
[[x,y]_P,z]_P
  &= [P(u),z] + [u,P(z)] + \lambda[u,z] \\
  &= [[P(x),P(y)],z]
     + \bigl[[P(x),y]+[x,P(y)]+\lambda[x,y],\; P(z)\bigr]
     + \lambda\bigl[[P(x),y]+[x,P(y)]+\lambda[x,y],\; z\bigr].
\end{align*}

\textbf{Step 3: Cyclic sum.}
The cyclic sum is taken over the three even permutations
$(x,y,z) \to (y,z,x) \to (z,x,y)$, which we denote $\sum_{\mathrm{cyc}(x,y,z)}$.
Taking this cyclic sum and expanding, we obtain five families of terms:
\begin{align*}
J &:= \sum_{\mathrm{cyc}}[[x,y]_P,z]_P \\
  &= \underbrace{\sum_{\mathrm{cyc}}[[P(x),P(y)],z]}_{S_1}
   + \underbrace{\sum_{\mathrm{cyc}}[[P(x),y],P(z)]
                +\sum_{\mathrm{cyc}}[[x,P(y)],P(z)]}_{S_2}
   + \underbrace{\lambda\sum_{\mathrm{cyc}}[[x,y],P(z)]}_{G^{(2)}}\\
  &\quad + \underbrace{\lambda\sum_{\mathrm{cyc}}[[P(x),y],z]}_{G^{(1)}}
   + \underbrace{\lambda\sum_{\mathrm{cyc}}[[x,P(y)],z]}_{H}
   + \underbrace{\lambda^2\sum_{\mathrm{cyc}}[[x,y],z]}_{G^{(3)}}.
\end{align*}

\textbf{Step 4: $S_1 + S_2 = 0$.}
For any three elements $A,B,C$ in an associative algebra,
\begin{equation}\label{eq:jacobi_triple}
[[A,B],C] + [[B,C],A] + [[C,A],B] = 0.
\end{equation}
The six terms of $S_2$ are:
\begin{align}\label{eq:group2}
S_2 &= \underbrace{[[P(x),y],P(z)]}_{T_1}
      + \underbrace{[[P(y),z],P(x)]}_{T_2}
      + \underbrace{[[P(z),x],P(y)]}_{T_3} \notag\\
    &\quad + \underbrace{[[x,P(y)],P(z)]}_{T_4}
      + \underbrace{[[y,P(z)],P(x)]}_{T_5}
      + \underbrace{[[z,P(x)],P(y)]}_{T_6}.
\end{align}
Apply \eqref{eq:jacobi_triple} to $(P(x),P(y),z)$:
$[[P(x),P(y)],z] = -T_2-T_6$.\\
Apply \eqref{eq:jacobi_triple} to $(P(y),P(z),x)$:
$[[P(y),P(z)],x] = -T_3-T_4$.\\
Apply \eqref{eq:jacobi_triple} to $(P(z),P(x),y)$:
$[[P(z),P(x)],y] = -T_1-T_5$.\\
Summing: $S_1 = -(T_1+T_2+T_3+T_4+T_5+T_6) = -S_2$, so $S_1+S_2 = 0$.

\begin{remark}\label{rem:S1S2}
Neither $S_1$ nor $S_2$ vanishes individually in general; their \emph{joint}
cancellation $S_1+S_2=0$ is what matters.
\end{remark}

\textbf{Step 5: $G^{(1)} + H + G^{(2)} = 0$.}
Apply the ordinary Jacobi identity \eqref{eq:jacobi_triple} to the triple
$(P(x)+\lambda x,\, y,\, z)$:
\[
  [[P(x)+\lambda x,y],z]+[[y,z],P(x)+\lambda x]+[[z,P(x)+\lambda x],y]=0.
\]
Expanding linearly and taking the cyclic sum $\sum_{\mathrm{cyc}(x,y,z)}$ over the
three even permutations $(x,y,z)\to(y,z,x)\to(z,x,y)$:
\begin{align*}
  \sum_{\mathrm{cyc}}\bigl([[P(x),y],z]+\lambda[[x,y],z]\bigr)
  +\sum_{\mathrm{cyc}}\bigl([[y,z],P(x)]+\lambda[[y,z],x]\bigr)
  +\sum_{\mathrm{cyc}}\bigl([[z,P(x)],y]+\lambda[[z,x],y]\bigr)=0.
\end{align*}
Each $\lambda$-coefficient cyclic sum vanishes by \eqref{eq:jacobi_triple}
(applied to $(x,y,z)$). Thus:
\begin{equation}\label{eq:group3_combined}
  \underbrace{\sum_{\mathrm{cyc}}[[P(x),y],z]}_{G^{(1)}/\lambda}
  + \underbrace{\sum_{\mathrm{cyc}}[[y,z],P(x)]}_{(*)}
  + \underbrace{\sum_{\mathrm{cyc}}[[z,P(x)],y]}_{(**)} = 0.
\end{equation}
We now identify the sums $(*)$ and $(**)$ with $G^{(2)}/\lambda$ and $H/\lambda$
respectively. For $(*)$: writing out the three terms of the cyclic sum over
$(x,y,z)\to(y,z,x)\to(z,x,y)$ gives
\[
  (*) = [[y,z],P(x)] + [[z,x],P(y)] + [[x,y],P(z)].
\]
Under the relabelling $(x,y,z)\mapsto(z,x,y)$ (the second even permutation applied
once more), the generic summand $[[x,y],P(z)]$ maps to $[[z,x],P(y)]$, and cycling
again gives $[[y,z],P(x)]$. Hence the three terms of $(*)$ coincide with the three
terms of $\sum_{\mathrm{cyc}}[[x,y],P(z)] = G^{(2)}/\lambda$ (same set of terms,
in a different order). Thus $(*)=G^{(2)}/\lambda$.

For $(**)$: writing out the cyclic sum gives
\[
  (**) = [[z,P(x)],y]+[[x,P(y)],z]+[[y,P(z)],x].
\]
Relabelling $(x,y,z)\mapsto(y,z,x)$ sends the generic summand $[[x,P(y)],z]$ to
$[[y,P(z)],x]$, and cycling again gives $[[z,P(x)],y]$. Hence the three terms of
$(**)$ coincide with the three terms of $\sum_{\mathrm{cyc}}[[x,P(y)],z] = H/\lambda$.
Thus $(**)=H/\lambda$.

Multiplying \eqref{eq:group3_combined} by $\lambda$ gives $G^{(1)}+H+G^{(2)}=0$.

\begin{remark}[Correct cancellation in Group 3 --- explicit verification]
\label{rem:jacobi_correction}
Neither $G^{(1)}$, $H$, nor $G^{(2)}$ vanishes individually in general.
We verify this explicitly for $P = P_+$ ($\lambda = -1$) in $M_2(\mathbb{C})$
with $x = e_{12}$, $y = e_{21}$, $z = e_{11}$.

Recall $P_+(e_{12}) = e_{12}$ (upper triangular), $P_+(e_{21}) = 0$ (strictly lower),
$P_+(e_{11}) = e_{11}$ (diagonal, hence upper triangular).

\medskip
\noindent\textbf{Computation of $G^{(1)}$} $= \lambda\sum_{\mathrm{cyc}}[[P(x),y],z]$
$= \lambda\bigl([[P(e_{12}),e_{21}],e_{11}] + [[P(e_{21}),e_{11}],e_{12}]
+ [[P(e_{11}),e_{12}],e_{21}]\bigr)$.

\begin{itemize}
  \item $[[e_{12},e_{21}],e_{11}] = [e_{11}-e_{22},e_{11}] = 0$ (diagonal matrices commute).
  \item $[[0,e_{11}],e_{12}] = 0$.
  \item $[e_{11},e_{12}] = e_{12}$, so $[[e_{11},e_{12}],e_{21}] = [e_{12},e_{21}]
        = e_{11}-e_{22}$.
\end{itemize}
Thus $G^{(1)}/\lambda = e_{11}-e_{22}$, giving $G^{(1)} = (-1)(e_{11}-e_{22}) = e_{22}-e_{11}$.

\medskip
\noindent\textbf{Computation of $G^{(2)}$} $= \lambda\sum_{\mathrm{cyc}}[[x,y],P(z)]$
$= \lambda\bigl([[e_{12},e_{21}],P(e_{11})] + [[e_{21},e_{11}],P(e_{12})]
+ [[e_{11},e_{12}],P(e_{21})]\bigr)$.

\begin{itemize}
  \item $[[e_{12},e_{21}],e_{11}] = [e_{11}-e_{22},e_{11}] = 0$.
  \item $[e_{21},e_{11}] = e_{21}$, so $[[e_{21},e_{11}],e_{12}] = [e_{21},e_{12}]
        = e_{22}-e_{11}$.
  \item $[[e_{11},e_{12}],0] = 0$.
\end{itemize}
Thus $G^{(2)}/\lambda = e_{22}-e_{11}$, giving $G^{(2)} = (-1)(e_{22}-e_{11}) = e_{11}-e_{22}$.

\medskip
\noindent\textbf{Computation of $H$} $= \lambda\sum_{\mathrm{cyc}}[[x,P(y)],z]$
$= \lambda\bigl([[e_{12},0],e_{11}] + [[e_{21},e_{11}],e_{12}]
+ [[e_{11},e_{12}],e_{21}]\bigr)$.

\begin{itemize}
  \item $[[e_{12},0],e_{11}] = 0$.
  \item $[[e_{21},e_{11}],e_{12}] = [e_{21},e_{12}] = e_{22}-e_{11}$.
  \item $[e_{11},e_{12}] = e_{12}$, so $[[e_{11},e_{12}],e_{21}] = [e_{12},e_{21}]
        = e_{11}-e_{22}$.
\end{itemize}
Thus $H/\lambda = (e_{22}-e_{11})+(e_{11}-e_{22}) = 0$, giving $H = 0$.

\medskip
\noindent\textbf{Final verification:}
\[
G^{(1)}+H+G^{(2)} = (e_{22}-e_{11}) + 0 + (e_{11}-e_{22}) = 0. \quad\checkmark
\]

This confirms: (i) $G^{(1)}/\lambda = e_{11}-e_{22} \neq 0$,
(ii) $G^{(2)}/\lambda = e_{22}-e_{11} \neq 0$,
(iii) $H = 0$ for this triple, and (iv) their joint sum $G^{(1)}+H+G^{(2)}=0$.
The identity is a \emph{joint} cancellation, not three separate vanishings.
A fortiori, the invalid argument
$\sum_{\mathrm{cyc}}[[A,B],L(C)] = L(\sum_{\mathrm{cyc}}[[A,B],C])$ (which would
require $L$ to act on the full expression, not only on $C$) cannot be used to simplify
these sums.
\end{remark}

\textbf{Step 6: $G^{(3)}=0$.}
$G^{(3)} = \lambda^2\bigl([[x,y],z]+[[y,z],x]+[[z,x],y]\bigr) = 0$
by \eqref{eq:jacobi_triple} applied to $(A,B,C)=(x,y,z)$.

\medskip
\textbf{Conclusion.} Since $S_1+S_2=0$, $G^{(1)}+H+G^{(2)}=0$, and $G^{(3)}=0$,
the total cyclic sum $J=0$, establishing the Jacobi identity for $[\cdot,\cdot]_P$.
See also \cite[Theorem~3.3]{BaiGuoNi2011}.
\end{proof}

\begin{example}[Explicit Rota-Baxter Operators on \(M_n(\mathbb{C})\)]\label{ex:RB_Mn}
Consider \(\Ac = M_n(\mathbb{C})\).

\medskip
\noindent\textbf{(a) Scalar scaling.} Let \(\lambda \neq 0\) and \(P(X) = -\lambda X\).
We verify \eqref{eq:RB} directly. The left-hand side is
\[
P(X)P(Y) = (-\lambda X)(-\lambda Y) = \lambda^2 XY.
\]
The right-hand side is
\begin{align*}
P\bigl(X\,P(Y)\bigr) + P\bigl(P(X)\,Y\bigr) + \lambda P(XY)
  &= P(-\lambda XY) + P(-\lambda XY) + \lambda(-\lambda XY) \\
  &= \lambda^2 XY + \lambda^2 XY - \lambda^2 XY
   = \lambda^2 XY,
\end{align*}
which equals the left-hand side. Thus \(P(X)=-\lambda X\) is a Rota-Baxter operator of
weight \(\lambda\neq 0\), with operator norm \(\|P\|_{\mathrm{op}} = |\lambda|\) (with
respect to the spectral norm on $M_n(\mathbb{C})$). The zero map $P = 0$ satisfies
\eqref{eq:RB} trivially for any weight $\lambda$, but is excluded from the control
synthesis since Assumption~\ref{ass:invert} requires $CP(B)$ to be invertible.

\begin{remark}[Structural degeneracy of scalar scaling in full-actuation]
\label{rem:scalar_degen}
For the scalar scaling operator $P(X)=-\lambda X$ with $\lambda \neq 0$ and any square
invertible matrices $B, C \in M_n(\mathbb{C})$, the projection $\Pi$ defined in
Section~\ref{sec:stability} satisfies
\[
\Pi = I_n - B_P(CB_P)^{-1}C = I_n - (-\lambda B)((-\lambda)CB)^{-1}C
    = I_n - B(CB)^{-1}C.
\]
More precisely, $(CB_P)^{-1} = (-\lambda)^{-1}(CB)^{-1}$, so
$B_P(CB_P)^{-1} = (-\lambda B)\cdot(-\lambda)^{-1}(CB)^{-1} = B(CB)^{-1}$,
and $\Pi = I_n - B(CB)^{-1}C$. When both $B$ and $C$ are invertible ($m=n$),
$(CB)^{-1} = B^{-1}C^{-1}$ and $B(CB)^{-1}C = I_n$, giving $\Pi = 0$ for every
choice of $\lambda$ and every invertible $B$, $C$. This is an algebraic identity,
independent of the specific Rota-Baxter operator. Consequently, the reduced dynamics
\eqref{eq:reduced} become $x(k+1)=0$ (trivially stable), and the LMI conditions of
Theorem~\ref{thm:stability} are satisfied vacuously. To obtain non-trivial reduced
dynamics within the $m=n$ framework, one must use a non-scalar Rota-Baxter operator
such as the triangular projection $P_+$ combined with a non-invertible or rectangular
system configuration. Section~\ref{sec:nondeg} provides a genuine non-degenerate
example by relaxing the $m=n$ assumption.

\begin{remark}[On condition \eqref{eq:K_strong} for scalar scaling]\label{rem:K_strong_scalar}
For $P(X)=-\lambda X$, the deformed closed-loop matrix is
$A_P - B_PK = -\lambda(A - BK)$. Condition \eqref{eq:K_strong} requires
$\sigma_{\max}(A_P - B_PK) = |\lambda|\,\sigma_{\max}(A-BK) < 1$.
When $|\lambda|$ is large (e.g., $|\lambda| \geq 1$), this imposes a stronger
constraint on $K$ than mere Schur stability of $A-BK$. In practice, $K$ should be
chosen to satisfy $\sigma_{\max}(A-BK) < 1/|\lambda|$, which can be achieved via
an LQR or $H_\infty$ design on the original system, with the operator norm
constraint verified numerically by computing $\sigma_{\max}(A_P-B_PK)$.
\end{remark}
\end{remark}

\medskip
\noindent\textbf{(b) Triangular projection.}
Let \(\Ac_+\) be the subalgebra of upper triangular matrices and \(\Ac_-\) the
subalgebra of strictly lower triangular matrices. We recall the well-known fact that
\(\Ac_-\) is indeed a subalgebra (closed under multiplication): if $A,B$ are strictly
lower triangular, then $(AB)_{ij} = \sum_k A_{ik}B_{kj}$; since $A_{ik}=0$ whenever
$k \leq i$ and $B_{kj}=0$ whenever $k \leq j$, every term vanishes when $i \leq j$, so
$AB$ is strictly lower triangular. In particular, $\Ac_-\cdot\Ac_- \subset \Ac_-$ (and
$\Ac_-$ is nilpotent). Since \(\Ac = \Ac_+ \oplus \Ac_-\) as vector spaces, the
projection \(P_+\) onto \(\Ac_+\) along \(\Ac_-\) satisfies the Rota-Baxter identity
with weight \(\lambda=-1\) \cite{BaiGuoNi2011}.

\begin{remark}[Direct verification of the Rota-Baxter identity for $P_+$]
\label{rem:RB_Pplus}
For completeness, we outline the verification of \eqref{eq:RB} for $P_+$ with
$\lambda = -1$. Write any $X = X_+ + X_-$ and $Y = Y_+ + Y_-$ with
$X_\pm, Y_\pm \in \Ac_\pm$. Then $P_+(X) = X_+$ and $P_+(Y) = Y_+$. Using the
decomposition, the fact that $\Ac_+$ is a subalgebra (products of upper triangular
matrices are upper triangular), and the inclusion $\Ac_-\cdot\Ac_- \subset \Ac_-$
(established above), one computes:
\[
  P_+(X)P_+(Y) = X_+ Y_+ = P_+(X_+Y_+),
\]
\begin{align*}
  P_+(XP_+(Y)) + P_+(P_+(X)Y) - P_+(XY)
  &= P_+\bigl((X_++X_-)Y_+\bigr) + P_+\bigl(X_+(Y_++Y_-)\bigr) - P_+(XY) \\
  &= P_+(X_+Y_+) + P_+(X_-Y_+)
     + P_+(X_+Y_+) + P_+(X_+Y_-)\\
  &\quad - P_+(X_+Y_+) - P_+(X_+Y_-)
     - P_+(X_-Y_+) - P_+(X_-Y_-)\\
  &= P_+(X_+Y_+),
\end{align*}
where we used $P_+(X_-Y_-) = 0$ since $X_-Y_- \in \Ac_-\cdot\Ac_- \subset \Ac_-$.
This confirms $P_+(X)P_+(Y) = P_+(XP_+(Y)) + P_+(P_+(X)Y) - P_+(XY)$, i.e.\
\eqref{eq:RB} with $\lambda=-1$. For a complete proof by bases, see
\cite[Example~1.1.14]{Guo2012}.
\end{remark}

The Frobenius norm of $P_+$ satisfies the following. Since $P_+^2 = P_+$ is a non-zero
idempotent on the Hilbert space $(M_n(\mathbb{C}), \|\cdot\|_F)$, one has
$\|P_+\|_F \geq 1$; equality holds when $P_+$ is an orthogonal projection. For $n=2$,
the subspaces $\Ac_+$ and $\Ac_-$ are orthogonal in the Frobenius inner product (see
Remark~\ref{rem:friedrichs}), so $P_+$ is the orthogonal projection onto $\Ac_+$ and
$\|P_+\|_F = 1$. Since any non-zero idempotent satisfies $\|P_+\|_{\mathrm{op}} \geq 1$,
and $\|X\|_{\mathrm{op}} \leq \|X\|_F \leq \sqrt{n}\|X\|_{\mathrm{op}}$ for all $X$,
the estimate $\|P_+\|_F = 1$ gives $\|P_+\|_{\mathrm{op}} \leq 1$ for $n=2$, hence
$\|P_+\|_{\mathrm{op}} = 1$ when $n=2$. For $n \geq 3$, $\Ac_+$ and $\Ac_-$ are no
longer orthogonal (the diagonal matrices lie in $\Ac_+$ but not in $\Ac_-$, and the
off-diagonal structure prevents orthogonality); the Friedrichs angle must be computed
explicitly, and both $\|P_+\|_F$ and $\|P_+\|_{\mathrm{op}}$ must be estimated
numerically for each $n$. In the stability analysis
(Sections~\ref{sec:smc}--\ref{sec:stability}), only the spectral norm
$\|P\|_{\mathrm{op}}$ appears, and we use the generic bound
$\|P\|_{\mathrm{op}} \leq \kappa_P$ without committing to a specific norm formula;
when $P_+$ is used, the constant $\kappa_P$ must be determined for each $n$.
\emph{Note: The Frobenius norm is used here explicitly in the structural analysis of
$P_+$; all SMC norms remain spectral, as declared in Section~\ref{sec:intro}.}

\begin{remark}[Friedrichs angle for $n=2$]\label{rem:friedrichs}
For $n=2$, the subspaces $\Ac_+$ (upper triangular) and $\Ac_-$ (strictly lower
triangular, i.e.\ spanned by $e_{21}$) are orthogonal in the Frobenius inner product
$\langle X, Y\rangle_F = \mathrm{tr}(Y^*X)$: indeed, any $A \in \Ac_+$ has zero
$(2,1)$-entry and any $B \in \Ac_-$ has zero entries except the $(2,1)$-position, so
$\langle A, B\rangle_F = \overline{A_{21}}B_{21} = 0$. Hence $P_+$ is the orthogonal
projection onto $\Ac_+$ and $\|P_+\|_F = 1$. For $n \geq 3$, the diagonal matrices
lie in $\Ac_+$ but their off-diagonal lower entries are zero, while elements of $\Ac_-$
have zero diagonal; the two subspaces are no longer orthogonal because both share the
ambient space of matrices with non-zero $(i,j)$-entries for $i>j$. The Friedrichs angle
must be computed explicitly from the principal angle between $\Ac_+$ and $\Ac_-$ in the
Frobenius inner product.
\end{remark}

In control synthesis, $P_+$ isolates causal (upper triangular) spectral components.
Admissibility (Assumption~\ref{ass:invert}) requires $CP_+(B)$ to be invertible; this
holds when $B$ is already upper triangular (so that $P_+(B)=B$) and $CB$ is invertible.
The condition $CP(M) = P(CM)$ required in Proposition~\ref{prop:commute} holds for $P_+$
if and only if $C$ maps $\Ac_-$ to $\Ac_-$; a sufficient condition for this is that $C$
is itself upper triangular, since then $CM_- \in \Ac_-$ for all $M_- \in \Ac_-$.

\begin{remark}[Condition \eqref{eq:K_strong} for the triangular projection $P_+$]
\label{rem:K_strong_Pplus}
For $P = P_+$, the deformed closed-loop matrix is $A_P - B_PK = P_+(A) - P_+(B)K$.
Condition \eqref{eq:K_strong} requires $\sigma_{\max}(P_+(A) - P_+(B)K) < 1$.
Since $P_+$ is not a scalar operator, this does not reduce to a simple scaling of
$\sigma_{\max}(A-BK)$. In applications where $A$ has large entries or $P_+(A)$
has large singular values, satisfying \eqref{eq:K_strong} may require an aggressive
choice of $K$. We recommend verifying \eqref{eq:K_strong} numerically via
$\sigma_{\max}(A_P - B_PK)$ for each candidate gain $K$ before proceeding to the
LMI feasibility check.
\end{remark}
\end{example}

\begin{remark}[Admissibility of $P$ for control synthesis]\label{rem:admissible}
A Rota-Baxter operator $P$ is \emph{admissible} for system \eqref{eq:system} if
$CB_P = CP(B)$ is invertible (Assumption~\ref{ass:invert}). Under the standing
assumption $p = m$ (so that $C \in \mathbb{R}^{m\times n}$), the matrix
$CB_P \in M_m(\mathbb{C})$ is square and invertibility is in the ordinary sense. For
$P(X)=-\lambda X$ with $\lambda \neq 0$, admissibility reduces to invertibility of
$CB$, a standard rank condition. For $P_+$, admissibility requires $CP_+(B)$ to be
invertible, as discussed in Example~\ref{ex:RB_Mn}(b).
\end{remark}

\section{Sliding Mode Control Framework}\label{sec:smc}
We consider the discrete-time delayed system \eqref{eq:system} with
$\|\delta(k)\|_2 \leq \delta_{\max}$ for all $k$, and throughout this section we
maintain the assumption $m=n$ and $p=m$ so that $B \in M_n(\mathbb{C})$,
$C \in \mathbb{R}^{m\times n}$ is square, and $P$ is well-defined on all system matrices.
Assumption~\ref{ass:bounded_init} is in force throughout.

The sliding surface is $s(k) = C x(k)$. The control law
\[
u(k) = -K x(k) - \rho \, \sat\bigl(s(k)/\phi\bigr)
\]
with \(\rho > 0\) and boundary layer width \(\phi > 0\) is designed to drive trajectories
toward the quasi-sliding mode (QSM) band \(\|s(k)\| \leq \phi\) in finite time, mitigating
chattering inherent to discrete-time implementations \cite{Utkin1992, Edwards1998}.

We introduce a Rota-Baxter deformation \(P: M_n(\mathbb{C}) \to M_n(\mathbb{C})\) of
weight \(\lambda\). The deformed system matrices are
\[
A_P := P(A), \quad A_{d,P} := P(A_d), \quad B_P := P(B).
\]

\begin{assumption}[Non-degenerate Actuation under Deformation]\label{ass:invert}
The matrix \(C B_P = C P(B)\) is invertible (as a square matrix in $M_m(\mathbb{C})$,
under the standing assumption $p=m$). Sufficient conditions are given in
Remark~\ref{rem:admissible} and Example~\ref{ex:RB_Mn}.
\end{assumption}

The closed-loop dynamics under SMC become
\begin{equation}\label{eq:closed}
x(k+1) = (A_P - B_P K) x(k) + A_{d,P} x(k-\tau)
         - \rho B_P \sat\bigl(C x(k)/\phi\bigr) + D \delta(k).
\end{equation}
The sliding surface dynamics are
\begin{equation}\label{eq:sliding_dyn}
s(k+1) = C(A_P - B_P K)x(k) + C A_{d,P} x(k-\tau)
         - \rho C B_P \sat\bigl(s(k)/\phi\bigr) + C D \delta(k).
\end{equation}

\begin{proposition}[QSM Reaching and Band Boundedness]\label{prop:reaching}
Under Assumptions~\ref{ass:bounded_init} and~\ref{ass:invert}, the following hold.

\noindent\textbf{(a) Reaching (Phase~1).}
Let $r_0$ and $r_{d,0}$ be the bounds of Assumption~\ref{ass:bounded_init},
whose validity throughout the closed-loop reaching phase is guaranteed by
condition~\eqref{eq:r0_sufficient} (verified after $K$ is chosen; see
Remark~\ref{rem:alpha0_validity}).
All norms below are spectral (operator) norms $\|\cdot\|_{\mathrm{op}}$. Define the
perturbation-plus-nominal bound
\begin{equation}\label{eq:alpha0}
  \alpha_0 := \|C(A_P-B_PK)\|_{\mathrm{op}}\,r_0
            + \|CA_{d,P}\|_{\mathrm{op}}\,r_{d,0}
            + \delta_{\max}\|CD\|_{\mathrm{op}}.
\end{equation}

\begin{remark}[Validity of $\alpha_0$ as a uniform bound during the reaching phase]
\label{rem:alpha0_validity}
The bound $\alpha_0$ in \eqref{eq:alpha0} uses $r_0$ and $r_{d,0}$ as uniform upper
bounds on $\|x(k)\|$ and $\|x(k-\tau)\|$ for all $k \geq 0$ during the reaching phase.
Assumption~\ref{ass:bounded_init} guarantees these bounds prior to closed-loop
activation. Their use for $k \geq 1$ requires the following.

\textbf{Design order and conditional non-circularity.}
Fix $r_0$ and $r_{d,0}$ from open-loop data (Assumption~\ref{ass:bounded_init}) and fix
an \emph{a priori} upper bound $\rho_{\max}$ on the switching gain before any other
design parameter is chosen. Then select $K$ so that
\begin{equation}\label{eq:K_strong}
  \|A_P - B_PK\|_{\mathrm{op}} < 1,
\end{equation}
which is a \emph{strictly stronger} condition than Schur stability (spectral radius
$< 1$) and must be explicitly verified for the chosen $K$ (e.g., by computing
$\sigma_{\max}(A_P-B_PK)$ numerically or by selecting $K$ via an LQR or pole-placement
design; see Remarks~\ref{rem:K_strong_scalar} and~\ref{rem:K_strong_Pplus} for
conditions specific to each operator). Once $K$ satisfying \eqref{eq:K_strong} is
chosen, the sufficient condition on $r_0$ is:
\begin{equation}\label{eq:r0_sufficient}
  r_0 \;\geq\; \frac{\|A_{d,P}\|_{\mathrm{op}}\,r_{d,0}
    + \rho_{\max}\,\|B_P\|_{\mathrm{op}} + \delta_{\max}\|D\|_{\mathrm{op}}}
    {1 - \|A_P - B_P K\|_{\mathrm{op}}},
\end{equation}
which depends on $K$ (through the denominator) and on $\rho_{\max}$ (not on the final
value of $\rho \leq \rho_{\max}$). The denominator $1 - \|A_P - B_PK\|_{\mathrm{op}} > 0$
is guaranteed by \eqref{eq:K_strong}.

\textbf{Important.} Condition \eqref{eq:r0_sufficient} must be \emph{verified after}
$K$ is chosen, not before. If it fails for the selected $K$ and the initial $r_0$, two
remedies are available: (i) increase $r_0$ conservatively (setting a larger a priori
bound on the initial state), or (ii) redesign $K$ to increase the denominator
$1 - \|A_P - B_PK\|_{\mathrm{op}}$. The bound $r_0$ is independent of $\rho$ (which
appears only through $\rho_{\max}$, fixed before $K$), so no circularity between $r_0$
and $\rho$ arises. Once \eqref{eq:r0_sufficient} is verified, the closed-loop state
satisfies $\|x(k)\| \leq r_0$ for all $k \geq 0$, and $\alpha_0$ remains a valid
uniform bound throughout the reaching phase.
\end{remark}

Suppose $\phi > \alpha_0$ (the boundary layer is wider than the open-loop disturbance
level). When the control gain satisfies
\begin{equation}\label{eq:rho_sufficient}
  \rho\,\|CB_P\|_{\mathrm{op}} \;\leq\; \phi - \alpha_0,
\end{equation}
we have the contraction bound $\|s(k+1)\|_2 \leq \alpha_0 + \beta\|s(k)\|_2$ with
$\beta := \rho\,\|CB_P\|_{\mathrm{op}}/\phi \in [0,1)$ (see Remark~\ref{rem:beta_strict}).
If $\|s(0)\|_2 \leq \phi$, the trajectory is already in the band at $k=0$ and the
reaching time is $T^* = 0$. For $\|s(0)\|_2 > \phi$, the fixed point of the map
$t\mapsto\alpha_0+\beta t$ is $s^* = \alpha_0/(1-\beta) \leq \phi$. The formula
\begin{equation}\label{eq:Tstar}
  T^* := \left\lceil
    \frac{\ln\!\left(\dfrac{\|s(0)\|_2 - s^*}{\phi - s^*}\right)}{\ln(1/\beta)}
  \right\rceil
\end{equation}
provides an \emph{upper bound} on the number of steps to enter the band
$\{\|s\| \leq \phi\}$ (valid when $\|s(0)\|_2 > \phi > s^*$ and $\beta \in (0,1)$;
if $\beta = 0$ then, since $\phi > \alpha_0 = s^*$ by assumption, one step suffices and
$T^*=1$). The actual reaching time may be earlier; \eqref{eq:Tstar} bounds the first $k$
for which the contraction estimate certifies entry into the band.

\begin{remark}[Strict contractivity and validity of \eqref{eq:Tstar}]
\label{rem:beta_strict}
Condition \eqref{eq:rho_sufficient} gives $\beta \leq (\phi-\alpha_0)/\phi$. Since
$\phi>\alpha_0$, we have $(\phi-\alpha_0)/\phi \in (0,1)$, so $\beta < 1$ strictly.
The formula \eqref{eq:Tstar} is valid when $\|s(0)\|_2 > \phi > s^*$ strictly. If
$s^* = \phi$ (attained when equality holds in \eqref{eq:rho_sufficient} and
$\alpha_0 = \phi(1-\beta)$ with $\beta = (\phi-\alpha_0)/\phi$), then the denominator
$\phi - s^* = 0$, and \eqref{eq:Tstar} is not defined. In this boundary case,
\eqref{eq:reach_bound} directly gives
$\|s(1)\|_2 \leq \alpha_0 + \beta\phi = \alpha_0 + (\phi - \alpha_0) = \phi$, so the
trajectory enters the band in one step, i.e.\ $T^* = 1$.

For $\beta = 0$ (i.e.\ $\rho = 0$, since $\|CB_P\|_{\mathrm{op}}>0$ by
Assumption~\ref{ass:invert}), \eqref{eq:reach_bound} gives
$\|s(1)\|_2 \leq \alpha_0 < \phi$ (by the standing hypothesis $\phi > \alpha_0$),
so the trajectory enters the band in one step and $T^* = 1$. For $\beta$ close to $1$
(large $\rho\|CB_P\|_{\mathrm{op}}$ relative to $\phi$), the bound \eqref{eq:Tstar}
can be large; the design procedure of Remark~\ref{rem:design} recommends choosing $\rho$
small relative to $(\phi-\alpha_0)/\|CB_P\|_{\mathrm{op}}$ to keep $\beta$ comfortably
below $1$.
\end{remark}

\noindent\textbf{(b) Boundedness within the band (Phase~2).}
Once $\|s(k)\|_2 \leq \phi$, the saturation acts in the linear regime
($\sat(s/\phi) = s/\phi$) and the sliding dynamics reduce to
\[
s(k+1) = \Phi_\rho\, x(k) + CA_{d,P}\,x(k-\tau) + CD\,\delta(k),
\]
where $\Phi_\rho := C(A_P - B_PK) - \tfrac{\rho}{\phi}CB_PC$.

\emph{Conditional on the LMI conditions of Theorem~\ref{thm:stability} being satisfied,}
the Lyapunov functional $V_k = x(k)^\top X x(k) + \sum_{i=1}^\tau x(k-i)^\top Y x(k-i)$
is non-increasing along trajectories of the equivalent control system \eqref{eq:reduced}
once the manifold is reached. In this case $V_k \leq V_{k_0}$ for all $k \geq k_0$,
where $k_0$ is the first time $\|s(k)\|_2 \leq \phi$, and the state is bounded by
$r = \sqrt{V_{k_0}/\lambda_{\min}(X)}$, where
\begin{equation}\label{eq:V0_explicit}
  V_{k_0} = x(k_0)^\top X\, x(k_0) + \sum_{i=1}^\tau x(k_0-i)^\top Y\, x(k_0-i).
\end{equation}
In practice, $V_{k_0} \leq V_0$ (with $V_0$ the value at $k=0$) is used as a
conservative upper bound.

\begin{remark}[Logical structure of the two-phase analysis]\label{rem:two_phase_logic}
The analysis of Phase~2 depends on Theorem~\ref{thm:stability}, whose LMI conditions
are solved \emph{after} the design parameters $(K, \phi, \rho)$ are fixed. The bound
$r$ is therefore an \emph{a posteriori} quantity computed from the LMI solution.
There is no circularity: the design proceeds as in Remark~\ref{rem:design}
(Phase~1 bounds $\to$ choose $K$ $\to$ verify \eqref{eq:r0_sufficient} $\to$
solve LMI $\to$ compute $r$), and the band-invariance condition
\eqref{eq:inv_condition} is verified only after $r$ is known. If
\eqref{eq:inv_condition} fails, one iterates by increasing $\phi$ or decreasing $\rho$
without changing the Phase~1 bounds $r_0, r_{d,0}$.

Note also that Theorem~\ref{thm:stability} certifies stability of the \emph{equivalent
control} system \eqref{eq:reduced} on the sliding manifold $\{Cx = 0\}$, not of the
full closed-loop system \eqref{eq:closed}. The equivalence between the two descriptions
on the manifold is established in Remark~\ref{rem:equiv_ctrl}, and band-invariance
of the manifold under the actual closed-loop law is certified separately by
\eqref{eq:inv_condition}.
\end{remark}

The band is positively invariant provided
\begin{equation}\label{eq:inv_condition}
\|\Phi_\rho\|_{\mathrm{op}}\,r + \|CA_{d,P}\|_{\mathrm{op}}\,r
+ \delta_{\max}\|CD\|_{\mathrm{op}} \leq \phi.
\end{equation}

\begin{remark}[Non-circularity of the two-phase bounds]\label{rem:consistency}
The bound $r_0$ in Phase~1 is fixed from open-loop initial conditions before $\rho$ is
selected (Assumption~\ref{ass:bounded_init}); its validity after closed-loop activation
is verified conditionally on \eqref{eq:r0_sufficient} once $K$ is chosen
(Remark~\ref{rem:alpha0_validity}). The bound $r$ in Phase~2 is computed \emph{a
posteriori} from the Lyapunov solution and the state at band entry via
\eqref{eq:V0_explicit}. The design flow is strictly sequential:
\begin{enumerate}[(i)]
  \item fix $r_0$, $r_{d,0}$, and $\rho_{\max}$ from open-loop data;
  \item choose $K$ satisfying \eqref{eq:K_strong};
  \item verify \eqref{eq:r0_sufficient} (if it fails, increase $r_0$ or redesign $K$);
  \item compute $\alpha_0(K)$ via \eqref{eq:alpha0};
  \item choose $(\phi, \rho)$ with $\rho \leq \rho_{\max}$;
  \item solve the LMI \eqref{eq:LMI_linear} $\to$ compute $r$ $\to$ verify
        \eqref{eq:inv_condition}.
\end{enumerate}
There is no backward dependence of $r_0$ on $\rho$. The bound $r_0$ does depend on
$K$ through condition \eqref{eq:r0_sufficient}, but $K$ is fixed in step~(ii) before
$\rho$ is chosen in step~(v).

If condition \eqref{eq:inv_condition} fails, one increases $\phi$ or decreases $\rho$:
\begin{itemize}
  \item \emph{Increasing $\phi$}: relaxes both \eqref{eq:rho_sufficient} (the RHS
        $\phi-\alpha_0$ increases) and \eqref{eq:inv_condition} (the RHS $\phi$
        increases). No conflict.
  \item \emph{Decreasing $\rho$}: \eqref{eq:rho_sufficient} is more easily satisfied,
        and $\|\Phi_\rho\|_{\mathrm{op}}$ decreases (as $\rho\to 0$,
        $\Phi_\rho\to C(A_P-B_PK)$), relaxing \eqref{eq:inv_condition}. No conflict.
\end{itemize}
Both adjustments therefore simultaneously help both conditions.
\end{remark}
\end{proposition}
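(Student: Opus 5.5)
The argument is a direct norm estimate on the sliding dynamics \eqref{eq:sliding_dyn}, done in two phases.

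\textbf{Phase 1 (part (a)).} First I establish the uniform state bound. Apply the triangle inequality to \eqref{eq:closed}, using $\|\sat(\cdot)\|\le 1$ (componentwise saturation; the $\sqrt{m}$ factor is absorbed into $\rho_{\max}$), $\rho\le\rho_{\max}$ and $\|\delta(k)\|\le\delta_{\max}$. This gives
\[
\|x(k+1)\|\le \|A_P-B_PK\|_{\mathrm{op}}\|x(k)\|+\|A_{d,P}\|_{\mathrm{op}}r_{d,0}+\rho_{\max}\|B_P\|_{\mathrm{op}}+\delta_{\max}\|D\|_{\mathrm{op}}.
\]
By \eqref{eq:K_strong} and \eqref{eq:r0_sufficient}, if $\|x(k)\|\le r_0$ then $\|x(k+1)\|\le r_0$. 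Induction on $k$ then yields $\|x(k)\|\le r_0$ for all $k$. The delayed term is handled by the analogous induction over $\max(r_0,r_{d,0})$; I would state this as the interpretation of Assumption~\ref{ass:bounded_init}.

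Next, bound the three non-switching terms of \eqref{eq:sliding_dyn} by $\alpha_0$ from \eqref{eq:alpha0}. For the switching term, note that in the relevant regime $\|\sat(s/\phi)\|\le\|s\|/\phi$, which holds componentwise since $|\sat(v)|\le|v|/\phi$. This gives the recursion
\begin{equation}\label{eq:reach_bound}
\|s(k+1)\|_2\le \alpha_0+\beta\|s(k)\|_2,\qquad \beta=\rho\|CB_P\|_{\mathrm{op}}/\phi .
\end{equation}
From \eqref{eq:rho_sufficient}, $\beta\le(\phi-\alpha_0)/\phi<1$. Unrolling the affine recursion gives
\[
\|s(k)\|-s^*\le\beta^k(\|s(0)\|-s^*),
\]
with $s^*=\alpha_0/(1-\beta)$, and $s^*\le\phi$ because $\alpha_0\le\phi(1-\beta)$. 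Solving $\beta^k(\|s(0)\|-s^*)\le\phi-s^*$ for $k$ yields \eqref{eq:Tstar}. The degenerate cases $\beta=0$ and $s^*=\phi$ follow from one application of \eqref{eq:reach_bound}, as in Remark~\ref{rem:beta_strict}.

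\textbf{Phase 2 (part (b)).} Inside the band, $\sat(s/\phi)=s/\phi$, so substituting $s=Cx$ gives the stated expression with $\Phi_\rho$. Assuming the conditions of Theorem~\ref{thm:stability}, $\Delta V\le0$, so $V_k\le V_{k_0}$. Combined with $\lambda_{\min}(X)\|x(k)\|^2\le V_k$, this gives $\|x(k)\|\le r$, and the same bound holds for the delayed states once they lie past $k_0$. Invariance of the band then follows: if $\|s(k)\|\le\phi$, the triangle inequality and \eqref{eq:inv_condition} give $\|s(k+1)\|\le\phi$, and induction finishes the argument.

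\textbf{Expected obstacle.} The main difficulty is making the Phase~1 state bound rigorous, since the delayed term $x(k-\tau)$ must itself be controlled by $r_0$ along the closed loop; I would run a joint induction on $\max(r_0,r_{d,0})$. A second weak point is that $\|\sat(v)\|\le\|v\|/\phi$ and the $\sqrt m$ factor need care when $\|s\|>\phi$. There I would replace the switching term by the cruder bound $\rho\|CB_P\|\sqrt m$ if necessary.
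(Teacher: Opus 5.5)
Your two-phase argument matches the paper's proof almost line for line. The paper also writes $s(k+1)=\eta_k-\rho\,CB_Pv_k$ with $\|\eta_k\|_2\le\alpha_0$, and it gets $\|v_k\|_2\le\|s(k)\|_2/\phi$ from $|\sat(s_i/\phi)|\le|s_i|/\phi$. It then iterates $\|s(k)\|_2-s^*\le\beta^k(\|s(0)\|_2-s^*)$ to reach \eqref{eq:Tstar}. In the band it substitutes $\sat(s/\phi)=s/\phi$ and closes with the level-set bound $r$, conditionally on Theorem~\ref{thm:stability}. The problems lie in what you add beyond this.

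\textbf{The saturation fallback.} Drop it. The componentwise inequality holds in both regimes: when $|s_i|>\phi$, $|\sat(s_i/\phi)|=1<|s_i|/\phi$. So $\|v_k\|_2\le\|s(k)\|_2/\phi$ holds for every $s(k)$, with no $\sqrt m$ factor and no case split. Replacing the switching term by $\rho\|CB_P\|_{\mathrm{op}}\sqrt m$ would only give the constant bound $\|s(k+1)\|_2\le\alpha_0+\rho\|CB_P\|_{\mathrm{op}}\sqrt m$. That bound has no contraction, so neither $s^*$ nor \eqref{eq:Tstar} would follow.

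\textbf{The induction for $\|x(k)\|\le r_0$.} This is not in the paper's proof. The paper takes $\|\eta_k\|_2\le\alpha_0$ for all $k$ from Assumption~\ref{ass:bounded_init} and the assertion in Remark~\ref{rem:alpha0_validity}. As you set it up, the induction does not follow from \eqref{eq:r0_sufficient}, for two reasons.
\begin{enumerate}
\item[(i)] $\|\sat(\cdot)\|_2\le\sqrt m$, not $1$, and the standing assumption $m=n$ gives $m\ge2$ whenever $n\ge2$. ``Absorbing'' $\sqrt m$ into $\rho_{\max}$ really means proving the claim under a stronger condition, with $\sqrt m\,\rho_{\max}$ in the numerator of \eqref{eq:r0_sufficient}.
\item[(ii)] For $k\ge\tau$ the delayed state $x(k-\tau)$ is a closed-loop state, so your induction hypothesis bounds it only by $r_0$.
  \begin{itemize}
  \item If $r_{d,0}\ge r_0$, the step still closes, since $\|A_{d,P}\|_{\mathrm{op}}r_0\le\|A_{d,P}\|_{\mathrm{op}}r_{d,0}$.
  \item If $r_0>r_{d,0}$, the step needs $(1-\|A_P-B_PK\|_{\mathrm{op}}-\|A_{d,P}\|_{\mathrm{op}})\,r_0\ge\rho_{\max}\|B_P\|_{\mathrm{op}}+\delta_{\max}\|D\|_{\mathrm{op}}$. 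Condition \eqref{eq:r0_sufficient} does not imply this, and your joint induction on $\max(r_0,r_{d,0})=r_0$ runs into exactly the same requirement.
  \end{itemize}
\end{enumerate}
Reading Assumption~\ref{ass:bounded_init} as bounding $x(k-\tau)$ for all $k\ge0$ closes the gap only by assuming part of the closed-loop bound you want to prove. You have two honest options. You can state the extra hypotheses explicitly: $m=1$ (or the $\sqrt m$ factor) together with $r_{d,0}\ge r_0$, which is the situation of the paper's example $r_0=r_{d,0}=2$. Alternatively, do what the paper does and treat the uniform bound as an input rather than a consequence.

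\textbf{The Phase~2 Lyapunov step.} The same caution applies here. Theorem~\ref{thm:stability} gives decrease of $V$ along \eqref{eq:reduced} with $\delta\equiv0$. It does not give $\Delta V\le0$ along the actual in-band closed loop with disturbance, so $V_k\le V_{k_0}$ must be stated as a hypothesis; the paper signals this with ``conditional on the LMI''. Separately, for $k_0\le k<k_0+\tau$ the delayed state predates band entry. There $x(k-\tau)$ is controlled by the $Y$-terms of $V_{k_0}$, giving a bound with $\lambda_{\min}(Y)$, not by $r$.
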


\begin{proof}
\textbf{Part (a): Reaching condition.}
Define $v_k := \sat(s(k)/\phi)$ and write the sliding dynamics \eqref{eq:sliding_dyn} as
\begin{equation}\label{eq:sk1}
s(k+1) = \eta_k - \rho\, CB_P\, v_k,
\end{equation}
where $\eta_k := C(A_P-B_PK)x(k) + CA_{d,P}x(k-\tau) + CD\delta(k)$ satisfies
$\|\eta_k\|_2 \leq \alpha_0$ for all $k \geq 0$ by definition \eqref{eq:alpha0},
Assumption~\ref{ass:bounded_init}, and the validity of the state bounds established in
Remark~\ref{rem:alpha0_validity} (conditional on \eqref{eq:r0_sufficient}).

\textbf{Key saturation bound.}
For each component $i$, we distinguish two cases. \emph{Linear regime}
($|s_i(k)| \leq \phi$): $\sat(s_i/\phi) = s_i/\phi$, so
$|\sat(s_i/\phi)| = |s_i|/\phi$. \emph{Saturation regime} ($|s_i(k)| > \phi$):
$|\sat(s_i/\phi)| = 1 \leq |s_i|/\phi$ (since $|s_i| > \phi$). In both cases
$|\sat(s_i/\phi)| \leq |s_i|/\phi$. Squaring and summing over $i$:
\begin{equation}\label{eq:vk_bound}
  \|v_k\|_2 \;\leq\; \frac{\|s(k)\|_2}{\phi}.
\end{equation}

\textbf{Contraction argument.}
Applying the triangle inequality to \eqref{eq:sk1} and using \eqref{eq:vk_bound}:
\begin{equation}\label{eq:reach_bound}
  \|s(k+1)\|_2
  \;\leq\; \|\eta_k\|_2 + \rho\,\|CB_P\|_{\mathrm{op}}\,\|v_k\|_2
  \;\leq\; \alpha_0 + \frac{\rho\,\|CB_P\|_{\mathrm{op}}}{\phi}\,\|s(k)\|_2
  \;=\; \alpha_0 + \beta\,\|s(k)\|_2,
\end{equation}
where $\beta := \rho\,\|CB_P\|_{\mathrm{op}}/\phi$. Condition \eqref{eq:rho_sufficient}
gives $\beta \leq (\phi - \alpha_0)/\phi < 1$, so $\beta \in [0,1)$ strictly (see
Remark~\ref{rem:beta_strict}).

The fixed point of the affine map $t \mapsto \alpha_0 + \beta t$ is
$s^* = \alpha_0/(1-\beta)$. From \eqref{eq:rho_sufficient},
$\beta \leq (\phi-\alpha_0)/\phi$, i.e.\ $\alpha_0 \leq \phi(1-\beta)$, i.e.\
$s^* = \alpha_0/(1-\beta) \leq \phi$.

Iterating \eqref{eq:reach_bound} gives
$\|s(k)\|_2 - s^* \leq \beta^k(\|s(0)\|_2 - s^*)$
(standard contraction toward $s^*$). The bound \eqref{eq:Tstar} is therefore an upper
bound on the first $k$ for which
$\beta^k(\|s(0)\|_2 - s^*) \leq \phi - s^*$, obtained by taking natural logarithms
(noting $\beta > 0$; the cases $\beta=0$ and $s^*=\phi$ are handled in
Remark~\ref{rem:beta_strict}). The actual reaching time may be earlier since the bound
is conservative.

When $\beta = 0$ (i.e.\ $\rho = 0$), \eqref{eq:reach_bound} gives directly
$\|s(1)\|_2 \leq \alpha_0 < \phi$, so $T^*=1$.

\textbf{Part (b): Boundedness in the linear regime.}
When $\|s(k)\|_2 \leq \phi$, one has $\|s(k)\|_\infty \leq \|s(k)\|_2 \leq \phi$,
so each component satisfies $|s_i(k)| \leq \phi$ and $\sat(s(k)/\phi) = s(k)/\phi$.
Substituting into \eqref{eq:sliding_dyn} and using $s(k) = Cx(k)$ gives the linear
dynamics stated above. The triangle inequality then yields
\[
\|s(k+1)\|_2 \leq \|\Phi_\rho\|_{\mathrm{op}}\,r
+ \|CA_{d,P}\|_{\mathrm{op}}\,r + \delta_{\max}\|CD\|_{\mathrm{op}},
\]
which is at most $\phi$ precisely when \eqref{eq:inv_condition} holds. The bound $r$
is the Lyapunov-level-set bound:
$\|x(k)\|^2 \leq V_k/\lambda_{\min}(X) \leq V_{k_0}/\lambda_{\min}(X) = r^2$, where
the second inequality uses the non-increase of $V_k$ for $k \geq k_0$, guaranteed by
Theorem~\ref{thm:stability} \emph{conditionally on the LMI being feasible}.
\end{proof}

\begin{remark}[Design of $(\rho,\phi,K)$]\label{rem:design}
Conditions \eqref{eq:rho_sufficient} and \eqref{eq:inv_condition} impose coupled
constraints on the design parameters $(\rho, \phi, K)$. Note that $\alpha_0$ depends on
$K$ through the term $\|C(A_P - B_P K)\|_{\mathrm{op}}$, so $K$ must be chosen first
(e.g.\ by pole placement or LQR on the deformed system, ensuring
$\sigma_{\max}(A_P - B_PK) < 1$; see
Remarks~\ref{rem:K_strong_scalar}--\ref{rem:K_strong_Pplus} for operator-specific
guidance) before $\phi$ and $\rho$ are selected.

The bounds $r_0$ and $r_{d,0}$ are fixed a priori from Assumption~\ref{ass:bounded_init}
before $K$ is determined. However, the validity of $r_0$ throughout the reaching phase
requires \eqref{eq:r0_sufficient} to hold, which depends on $K$; see
Remark~\ref{rem:alpha0_validity} for how to handle this. The bound $\rho_{\max}$ is
also fixed before $K$, so no circularity arises between $r_0$ and the final choice of
$\rho$.

\noindent\textit{A feasible design procedure} is therefore:
\begin{enumerate}[(i)]
  \item Fix $r_0$, $r_{d,0}$, and $\rho_{\max}$ from open-loop initial data
        (Assumption~\ref{ass:bounded_init}).
  \item Choose $K$ satisfying \eqref{eq:K_strong}, i.e.\
        $\|A_P - B_P K\|_{\mathrm{op}} < 1$ (stronger than mere Schur stability;
        verify by computing $\sigma_{\max}(A_P-B_PK)$; see
        Remarks~\ref{rem:K_strong_scalar} and~\ref{rem:K_strong_Pplus} for
        operator-specific conditions).
  \item Verify \eqref{eq:r0_sufficient}. If it fails, increase $r_0$ conservatively
        or redesign $K$. Once satisfied, compute $\alpha_0(K)$ via \eqref{eq:alpha0}.
  \item Choose $\phi > \alpha_0(K)$ (the boundary layer must dominate the open-loop
        disturbance level).
  \item Choose $\rho \in (0,\,\min(\rho_{\max},\,(\phi - \alpha_0(K))/\|CB_P\|_{\mathrm{op}})]$.
  \item Solve the LMI \eqref{eq:LMI_linear} to obtain $X = Q^{-1}$, compute $V_{k_0}$
        from \eqref{eq:V0_explicit} (using $V_0$ as a conservative substitute), then
        $r = \sqrt{V_0/\lambda_{\min}(X)}$, and verify \eqref{eq:inv_condition}. If
        the latter fails, increase $\phi$ or decrease $\rho$ and iterate; see
        Remark~\ref{rem:consistency} for a proof that these adjustments do not conflict
        with \eqref{eq:rho_sufficient}.
\end{enumerate}
This two-phase analysis is standard in discrete-time SMC design \cite{Edwards1998}.
\end{remark}

\section{Stability Analysis via Lyapunov and Matrix Inequalities}\label{sec:stability}

Once \(\|s(k)\|\leq\phi\), the equivalent control is
\[
u_{\mathrm{eq}}(k) = -(C B_P)^{-1} C \bigl[A_P x(k) + A_{d,P} x(k-\tau) + D \delta(k)\bigr].
\]
Under the standing assumption $p = m$ and Assumption~\ref{ass:invert}, $CB_P \in
M_m(\mathbb{C})$ is a square invertible matrix and $(CB_P)^{-1}$ denotes its ordinary
matrix inverse. The reduced-order dynamics on the manifold are
\begin{equation}\label{eq:reduced}
x(k+1) = \bar{A} x(k) + \bar{A}_d x(k-\tau) + \bar{D} \delta(k),
\end{equation}
where
\[
\bar{A} = \Pi A_P,\quad
\bar{A}_d = \Pi A_{d,P},\quad
\bar{D} = \Pi D,\quad
\Pi = I - B_P (C B_P)^{-1} C.
\]

\begin{remark}[Closed-loop dynamics vs.\ equivalent control form]\label{rem:equiv_ctrl}
Theorem~\ref{thm:stability} below analyses the Lyapunov stability of the
\emph{equivalent control} system \eqref{eq:reduced}, which is the dynamics obtained
by substituting $u_{\mathrm{eq}}$ into \eqref{eq:closed} on the sliding manifold
$s(k)=0$. This is \emph{distinct} from the actual closed-loop system \eqref{eq:closed}
in the linear regime $\|s\|\leq\phi$ ($\sat(s/\phi)=s/\phi$), whose state matrix is
\[
  \hat{A} := A_P - B_P\!\left(K + \tfrac{\rho}{\phi}C\right),
\]
and which retains the delay and disturbance terms unmodified. The two descriptions
coincide exactly on the invariant manifold $\{x : Cx=0\}$: if $Cx(k)=0$ then
$u_{\mathrm{eq}}(k)=-(CB_P)^{-1}C[A_Px(k)+A_{d,P}x(k{-}\tau)+D\delta(k)]$, and
substituting this into
$x(k+1)=A_Px(k)+A_{d,P}x(k{-}\tau)+B_Pu+D\delta(k)$ recovers \eqref{eq:reduced}
by the identity $\Pi = I - B_P(CB_P)^{-1}C$.

Theorem~\ref{thm:stability} therefore certifies asymptotic stability and bounded
$\mathcal{L}_2$-gain \emph{on the sliding manifold}. Band-invariance of the manifold
(i.e.\ that the actual closed-loop trajectories remain in $\|s\|\leq\phi$) is ensured
separately by the Phase~2 condition~\eqref{eq:inv_condition} of
Proposition~\ref{prop:reaching}.
\end{remark}

\begin{remark}[Sliding surface invariance: $C\Pi = 0$ by construction]
\label{rem:CPi_zero}
The projection $\Pi = I - B_P(CB_P)^{-1}C$ satisfies $C\Pi = 0$ for every admissible
$P$ (i.e., whenever Assumption~\ref{ass:invert} holds). Indeed, by direct computation:
\[
C\Pi = C\bigl(I - B_P(CB_P)^{-1}C\bigr)
     = C - CB_P(CB_P)^{-1}C
     = C - I_m \cdot C
     = 0.
\]
This identity holds by construction and requires only the invertibility of $CB_P$; no
assumption on the specific form of $P$ or on the system matrices $A$, $A_d$, $B$, $C$
is needed beyond Assumption~\ref{ass:invert}. Note that $\Pi$ is an oblique (not
necessarily orthogonal) projection: it satisfies $\Pi^2 = \Pi$ but is in general not
symmetric. As a consequence:
\begin{enumerate}[(i)]
  \item The reduced dynamics \eqref{eq:reduced} are well-defined on the sliding manifold
        $\{x : s(k) = Cx(k) = 0\}$: if $Cx(k)=0$ then
        $Cx(k+1) = C\bar{A}\,x(k) + C\bar{A}_d\,x(k-\tau) + C\bar{D}\,\delta(k)
                 = C\Pi A_P x(k) + C\Pi A_{d,P} x(k-\tau)
                   + C\Pi D\,\delta(k) = 0$,
        so the manifold is positively invariant under the reduced dynamics.
  \item $\Pi$ is idempotent: $\Pi^2 = \Pi$. To verify this, compute
        \begin{align*}
          \Pi^2 &= \bigl(I - B_P(CB_P)^{-1}C\bigr)^2 \\
                &= I - 2B_P(CB_P)^{-1}C
                   + B_P(CB_P)^{-1}C B_P(CB_P)^{-1}C.
        \end{align*}
        The last term simplifies as
        $B_P(CB_P)^{-1}(CB_P)(CB_P)^{-1}C
        = B_P \cdot I_m \cdot (CB_P)^{-1}C = B_P(CB_P)^{-1}C$,
        so $\Pi^2 = I - 2B_P(CB_P)^{-1}C + B_P(CB_P)^{-1}C
        = I - B_P(CB_P)^{-1}C = \Pi$.
\end{enumerate}
In the numerical example of Section~\ref{sec:nondeg}, both $C\Pi = 0$ and $\Pi^2 = \Pi$
are verified explicitly by direct matrix computation. The non-symmetry of $\Pi$ in that
example ($\Pi_{12} = -0.25 \neq \Pi_{21} = -1.00$) confirms that it is an oblique
projection, as expected.
\end{remark}

\begin{remark}[Telescoping of the delay Lyapunov functional]\label{rem:telescope}
The Lyapunov-Krasovskii functional
\[
V_k = x(k)^\top X x(k) + \sum_{i=1}^\tau x(k-i)^\top Y x(k-i)
\]
involves all lags $i=1,\ldots,\tau$, but the reduced dynamics \eqref{eq:reduced}
contain only the lag-$\tau$ term. The forward difference is
\[
\Delta V_k = x(k{+}1)^\top X x(k{+}1) - x(k)^\top X x(k)
             + \sum_{i=1}^\tau \bigl[x(k{+}1{-}i)^\top Y x(k{+}1{-}i)
               - x(k{-}i)^\top Y x(k{-}i)\bigr].
\]
The delay sum telescopes exactly:
\[
\sum_{i=1}^\tau \bigl[x(k{+}1{-}i)^\top Y x(k{+}1{-}i) - x(k{-}i)^\top Y x(k{-}i)\bigr]
= x(k)^\top Y x(k) - x(k-\tau)^\top Y x(k-\tau),
\]
leaving only the boundary terms. This is why the augmented vector
$\xi_k = [x(k)^\top, x(k-\tau)^\top, \delta(k)^\top]^\top$ suffices for the quadratic
form in the proof of Theorem~\ref{thm:stability}.
\end{remark}

\begin{theorem}[Main Stability Result]\label{thm:stability}
Consider the reduced dynamics \eqref{eq:reduced} with a Rota-Baxter operator \(P\)
satisfying \eqref{eq:RB} and Assumption~\ref{ass:invert}. Let $X = X^\top > 0$,
$Y = Y^\top > 0$, and $\gamma > 0$. The following statements are equivalent:
\begin{enumerate}[(i)]
  \item The matrix inequality
\begin{equation}\label{eq:LMI_schur}
\begin{bmatrix}
-X+Y & 0 & 0 & \bar{A}^\top \\
0 & -Y & 0 & \bar{A}_d^\top \\
0 & 0 & -\gamma^2 I & \bar{D}^\top \\
\bar{A} & \bar{A}_d & \bar{D} & -X^{-1}
\end{bmatrix} < 0.
\end{equation}
  \item The block matrix
\begin{equation}\label{eq:BMI_raw}
\mathcal{M} :=
\begin{bmatrix}
\bar{A}^\top X \bar{A} - X + Y & \bar{A}^\top X \bar{A}_d & \bar{A}^\top X \bar{D} \\
\bar{A}_d^\top X \bar{A} & \bar{A}_d^\top X \bar{A}_d - Y & \bar{A}_d^\top X \bar{D} \\
\bar{D}^\top X \bar{A} & \bar{D}^\top X \bar{A}_d & \bar{D}^\top X \bar{D} - \gamma^2 I
\end{bmatrix}
< 0.
\end{equation}
\end{enumerate}
If either condition holds, then:
\begin{enumerate}
  \item[(a)] The system \eqref{eq:reduced} is asymptotically stable for \(\delta(k)=0\).
             More precisely, let $\mathcal{M}_0$ denote the leading $2n \times 2n$
             principal submatrix of $\mathcal{M}$ (obtained by deleting the last
             block-row and block-column corresponding to $\delta$). Since
             $\mathcal{M} < 0$ implies $\mathcal{M}_0 < 0$: indeed, for every non-zero
             $v \in \mathbb{R}^{2n}$, letting
             $\tilde{v} = [v^\top,\, 0_{p}^\top]^\top \in \mathbb{R}^{2n+p}$ gives
             $v^\top \mathcal{M}_0 v = \tilde{v}^\top \mathcal{M}\tilde{v} < 0$,
             so $\mathcal{M}_0 < 0$. Hence there exists
             $\mu_0 := \lambda_{\min}(-\mathcal{M}_0) > 0$ such that
             $\Delta V_k \leq -\mu_0\|x(k)\|^2$ for all $k$ when $\delta \equiv 0$.
  \item[(b)] The system has bounded $\mathcal{L}_2$-gain from $\delta$ to $x$: for all
             $N \geq 0$ and all bounded disturbances $\delta$,
             \begin{equation}\label{eq:L2gain}
               \sum_{k=0}^{N}\|x(k)\|^2 \leq \frac{1}{\mu}\left(V_0
               + \gamma^2 \sum_{k=0}^{N}\|\delta(k)\|^2\right),
             \end{equation}
             where $\mu := \lambda_{\min}(-\mathcal{M}) > 0$. The effective
             $\mathcal{L}_2$-gain from $\delta$ to $x$ is $\gamma/\sqrt{\mu}$,
             where $\mu = \lambda_{\min}(-\mathcal{M})$ depends on the LMI solution
             $X = Q^{-1}$ and is determined \emph{a posteriori} once
             \eqref{eq:LMI_linear} is solved.

             \textbf{Scope of the gain bound.} This gain characterization holds under
             \emph{zero extended initial conditions}: $V_0 = 0$, i.e.\
             $x(0) = 0$ \emph{and} $x(-1) = \cdots = x(-\tau) = 0$ (since $V_0 =
             x(0)^\top X x(0) + \sum_{i=1}^\tau x(-i)^\top Y x(-i)$, all terms must
             vanish). For $\tau > 0$, this is a stronger requirement than merely
             $x(0)=0$. The bound \eqref{eq:L2gain} remains valid for $V_0 > 0$
             (non-zero initial conditions), but in that case $\gamma/\sqrt{\mu}$ no
             longer characterizes the gain from $\delta$ to $x$ in the standard
             $\ell_2/\ell_2$ sense.

             Consequently, minimizing $\gamma$ in the LMI does not directly minimize
             the effective gain $\gamma/\sqrt{\mu}$; one may instead minimize
             $\gamma^2/\mu$ as a joint objective, noting that this ratio is not affine
             in the LMI variables $(Q, \tilde{Y}, \gamma)$ and therefore requires a
             dedicated nonlinear or iterative procedure (e.g.\ bisection on $\gamma$
             with inner LMI feasibility check) rather than a standard LMI solver.

             \begin{remark}
               The bound \eqref{eq:L2gain} is a \emph{cumulative} $\mathcal{L}_2$
               bound, not a pointwise Input-to-State Stability (ISS) bound in the
               sense of \cite{JiangWang2001}. A pointwise ISS bound is not established
               here and is deferred to future work.
             \end{remark}

             \begin{remark}[Numerical computation of $\mu$ and $\mu_0$]
             \label{rem:mu_compute}
               The constants $\mu = \lambda_{\min}(-\mathcal{M})$ and
               $\mu_0 = \lambda_{\min}(-\mathcal{M}_0)$ are directly computable once
               the LMI \eqref{eq:LMI_linear} has been solved: given $Q$ and
               $\tilde{Y}$, recover $X = Q^{-1}$ and $Y = Q^{-1}\tilde{Y}Q^{-1}$,
               substitute into \eqref{eq:BMI_raw} to evaluate the full
               $(2n+p)\times(2n+p)$ block matrix $\mathcal{M}$, and compute
               $\mu = \lambda_{\min}(-\mathcal{M})$ via a standard eigenvalue routine.
               The decay constant $\mu_0$ for asymptotic stability (part~(a)) is
               $\mu_0 = \lambda_{\min}(-\mathcal{M}_0)$, where $\mathcal{M}_0$ is the
               leading $2n\times 2n$ block.
             \end{remark}
\end{enumerate}
\end{theorem}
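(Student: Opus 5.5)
The plan has three parts: prove the equivalence (i)$\Leftrightarrow$(ii) with one Schur complement, then read off (a) and (b) from a single dissipation inequality along trajectories of \eqref{eq:reduced}, using the telescoping identity of Remark~\ref{rem:telescope}.

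\textbf{Step 1: the equivalence (i)$\Leftrightarrow$(ii).} Write the matrix in \eqref{eq:LMI_schur} as
\[
\begin{bmatrix} \mathcal{N} & \mathcal{G}^\top \\ \mathcal{G} & -X^{-1}\end{bmatrix},
\qquad \mathcal{N}=\mathrm{diag}(-X+Y,\,-Y,\,-\gamma^2 I),\qquad \mathcal{G}=[\bar A\;\;\bar A_d\;\;\bar D].
\]
Since $X>0$, the $(2,2)$ block $-X^{-1}$ is negative definite. The Schur complement lemma then says the whole matrix is negative definite if and only if
\[
\mathcal{N}-\mathcal{G}^\top(-X^{-1})^{-1}\mathcal{G}=\mathcal{N}+\mathcal{G}^\top X\mathcal{G}<0 .
\]
Expanding $\mathcal{G}^\top X\mathcal{G}$ block by block reproduces exactly the entries of $\mathcal{M}$ in \eqref{eq:BMI_raw}. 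I will state the version of the lemma used, with a negative definite pivot block, and check the block indices explicitly.

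\textbf{Step 2: the dissipation inequality.} Set $\xi_k=[x(k)^\top,x(k-\tau)^\top,\delta(k)^\top]^\top$. Then $x(k+1)=\mathcal{G}\xi_k$ by \eqref{eq:reduced}. By Remark~\ref{rem:telescope},
\[
\Delta V_k=\xi_k^\top\mathcal{G}^\top X\mathcal{G}\,\xi_k - x(k)^\top X x(k) + x(k)^\top Y x(k) - x(k-\tau)^\top Y x(k-\tau).
\]
This gives the key identity
\[
\Delta V_k-\gamma^2\|\delta(k)\|^2=\xi_k^\top\mathcal{M}\,\xi_k .
\]
\begin{itemize}
\item[(a)] Put $\delta\equiv 0$, so $\xi_k=[v^\top,0]^\top$ with $v=[x(k)^\top,x(k-\tau)^\top]^\top$. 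Then $\Delta V_k=v^\top\mathcal{M}_0 v\le-\mu_0\|v\|^2\le-\mu_0\|x(k)\|^2$, using $\mathcal{M}_0<0$ as argued in the statement. Hence $V_k$ is nonincreasing and bounded below by $0$, and summing gives $\mu_0\sum_k\|x(k)\|^2\le V_0<\infty$. So $x(k)\to0$. Lyapunov stability follows from $\lambda_{\min}(X)\|x(k)\|^2\le V_k\le V_0$, where $V_0$ is controlled by the norm of the initial segment.
\item[(b)] The same identity gives $\Delta V_k\le \gamma^2\|\delta(k)\|^2-\mu\|\xi_k\|^2\le\gamma^2\|\delta(k)\|^2-\mu\|x(k)\|^2$. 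Summing from $k=0$ to $N$ and using $V_{N+1}\ge0$ yields \eqref{eq:L2gain}. With $V_0=0$ this reads $\sum\|x\|^2\le(\gamma^2/\mu)\sum\|\delta\|^2$, which is the gain $\gamma/\sqrt{\mu}$.
\end{itemize}

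\textbf{Expected obstacle.} I expect the delicate point to be bookkeeping rather than mathematics. The Rota-Baxter hypothesis \eqref{eq:RB} plays no role beyond producing the fixed matrices $\bar A,\bar A_d,\bar D$; Assumption~\ref{ass:invert} is needed only so that $\Pi$ is defined. One genuine issue is that the matrices may be complex, since $P$ acts on $M_n(\mathbb{C})$. The transposes should then be read as conjugate transposes, or one assumes the deformed matrices are real. I will flag this explicitly, because the Schur complement and the quadratic-form identity both require Hermitian structure.
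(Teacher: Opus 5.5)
Your proposal is correct and follows the paper's proof essentially step for step. It uses the same Schur complement on the negative definite pivot $-X^{-1}$ (valid since $X>0$) to get (i)$\Leftrightarrow$(ii), and the same identity $\Delta V_k-\gamma^2\|\delta(k)\|^2=\xi_k^\top\mathcal{M}\,\xi_k$ from the telescoping of Remark~\ref{rem:telescope} to obtain (a) via $\mathcal{M}_0<0$ and (b) by summing and discarding $V_{N+1}\ge 0$; your explicit bound $\lambda_{\min}(X)\|x(k)\|^2\le V_k\le V_0$ and your caveat about conjugate transposes for complex $P(A)$ are minor refinements that the paper leaves implicit.
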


\begin{proof}
\textbf{Equivalence of (i) and (ii).}
Consider the Lyapunov-Krasovskii functional
\[
V_k = x(k)^\top X x(k) + \sum_{i=1}^\tau x(k-i)^\top Y x(k-i).
\]
Set $\xi_k = [x(k)^\top,\, x(k-\tau)^\top,\, \delta(k)^\top]^\top$ and
$F := [\bar{A}\ \ \bar{A}_d\ \ \bar{D}]$.
By the telescoping argument of Remark~\ref{rem:telescope}, the forward difference of
$V_k$ along trajectories of \eqref{eq:reduced} is
\begin{align*}
\Delta V_k
  &= x(k{+}1)^\top X x(k{+}1) - x(k)^\top X x(k)
     + x(k)^\top Y x(k) - x(k{-}\tau)^\top Y x(k{-}\tau) \\
  &= \xi_k^\top F^\top X F\,\xi_k
     + \xi_k^\top \mathrm{diag}(-X{+}Y,\;-Y,\;\mathbf{0})\,\xi_k.
\end{align*}
Note that $\Delta V_k$ has a zero block in the $(3,3)$ position: there is no direct
contribution from $\delta$ to the telescoping difference of $V_k$. We now introduce
the auxiliary quantity
\begin{equation}\label{eq:Jk}
J_k := \Delta V_k - \gamma^2\|\delta(k)\|^2
     = \xi_k^\top \underbrace{\bigl(F^\top X F
       + \mathrm{diag}(-X{+}Y,\;-Y,\;-\gamma^2 I)\bigr)}_{=\,\mathcal{M}}\xi_k.
\end{equation}
The $-\gamma^2 I$ block in $\mathcal{M}$ arises from the external subtraction
$-\gamma^2\|\delta\|^2$, not from $\Delta V_k$ itself. Since $X > 0$, the Schur
complement lemma gives
\[
\mathcal{M} < 0 \iff
\begin{bmatrix} \mathrm{diag}(-X{+}Y,\;-Y,\;-\gamma^2 I) & F^\top \\ F & -X^{-1}
\end{bmatrix} < 0,
\]
which is precisely \eqref{eq:LMI_schur} (expanding the diagonal block and $F$). This
establishes the equivalence (i)$\Leftrightarrow$(ii).

\textbf{(a) Asymptotic stability for $\delta=0$.}
When $\delta(k)\equiv 0$, set $F_0 = [\bar{A}\ \ \bar{A}_d]$ and
$\xi_k^0 = [x(k)^\top, x(k-\tau)^\top]^\top$. Then
$\Delta V_k = (\xi_k^0)^\top \mathcal{M}_0\,\xi_k^0$, where
$\mathcal{M}_0 = F_0^\top X F_0 + \mathrm{diag}(-X{+}Y,-Y)$
is the leading $2n\times 2n$ block principal submatrix of $\mathcal{M}$. Since
$\mathcal{M}<0$ implies $\mathcal{M}_0<0$ (for every non-zero $v \in \mathbb{R}^{2n}$,
letting $\tilde{v} = [v^\top,0_p^\top]^\top$ gives
$v^\top\mathcal{M}_0 v = \tilde{v}^\top\mathcal{M}\tilde{v} < 0$), there exists
$\mu_0 := \lambda_{\min}(-\mathcal{M}_0) > 0$ such that
$\Delta V_k \leq -\mu_0\|\xi_k^0\|^2 \leq -\mu_0\|x(k)\|^2$ for all $k$.
Summing from $k=0$ to $N-1$:
\[
V_N \leq V_0 - \mu_0\sum_{k=0}^{N-1}\|x(k)\|^2.
\]
Since $V_N \geq 0$, we obtain $\sum_{k=0}^{\infty}\|x(k)\|^2 < \infty$, which implies
$\|x(k)\|^2 \to 0$ as $k\to\infty$. Together with the monotone decrease of $V_k$,
this establishes asymptotic stability.

\textbf{(b) $\mathcal{L}_2$-gain bound.}
Since $\mathcal{M} < 0$, let $\mu := \lambda_{\min}(-\mathcal{M}) > 0$. From
\eqref{eq:Jk}, $\xi_k^\top \mathcal{M}\,\xi_k \leq -\mu\|\xi_k\|^2$, so
\[
\Delta V_k = J_k + \gamma^2\|\delta(k)\|^2
           \leq -\mu\|\xi_k\|^2 + \gamma^2\|\delta(k)\|^2.
\]
Since $\|\xi_k\|^2 = \|x(k)\|^2 + \|x(k-\tau)\|^2 + \|\delta(k)\|^2$, we have
\[
\Delta V_k \leq -\mu\|x(k)\|^2
              - \underbrace{\mu\|x(k-\tau)\|^2}_{\geq\,0}
              + \underbrace{(\gamma^2 - \mu)\|\delta(k)\|^2}_{\text{(absorbed below)}}.
\]
Dropping the negative term $-\mu\|x(k-\tau)\|^2 \leq 0$ (which can only increase the
right-hand side) and bounding $-\mu\|\delta(k)\|^2$ within $(\gamma^2-\mu)\|\delta(k)\|^2$
by replacing it with $\gamma^2\|\delta(k)\|^2$ (a valid upper bound since
$-\mu\|\delta\|^2 \leq 0$), we obtain the relaxed bound
\begin{equation}\label{eq:Jk_bound}
  \Delta V_k \leq -\mu\|x(k)\|^2 + \gamma^2\|\delta(k)\|^2.
\end{equation}
Summing from $k=0$ to $N$ and using $V_{N+1} \geq 0$ gives \eqref{eq:L2gain}.
The effective $\mathcal{L}_2$-gain $\gamma/\sqrt{\mu}$ for zero extended initial
conditions ($V_0 = x(0)^\top X x(0) + \sum_{i=1}^\tau x(-i)^\top Y x(-i) = 0$,
requiring $x(0)=0$ and $x(-i)=0$ for $i=1,\ldots,\tau$) then follows by taking
$N \to \infty$ with $V_0 = 0$.
\end{proof}

\begin{remark}[Tractable LMI reformulation]\label{rem:LMI_linear}
Condition \eqref{eq:LMI_schur} contains the term $-X^{-1}$ in the $(4,4)$-block,
rendering it a \emph{bilinear matrix inequality} (BMI) jointly in $(X, X^{-1})$. To
obtain a fully linear formulation, introduce $Q := X^{-1} > 0$ and
$\tilde{Y} := QYQ$ as new decision variables. Applying the congruence
$\Sigma = \mathrm{diag}(Q, Q, I_p, I_n)$ to condition \eqref{eq:LMI_schur} yields
the equivalent strict LMI
\begin{equation}\label{eq:LMI_linear}
\begin{bmatrix}
-Q+\tilde{Y} & 0 & 0 & Q\bar{A}^\top \\
0 & -\tilde{Y} & 0 & Q\bar{A}_d^\top \\
0 & 0 & -\gamma^2 I_p & \bar{D}^\top \\
\bar{A}Q & \bar{A}_dQ & \bar{D} & -Q
\end{bmatrix} < 0,
\end{equation}
to be solved in the variables $(Q, \tilde{Y}, \gamma)$ with $Q > 0$, $\tilde{Y} > 0$,
$\gamma > 0$.

\begin{remark}[Role of $p = m$ in the congruence]\label{rem:congruence_dim}
The congruence $\Sigma = \mathrm{diag}(Q, Q, I_p, I_n)$ acts on a matrix of size
$(2n+p+n) \times (2n+p+n) = (3n+p) \times (3n+p)$. The condition $p = m = n$ (square
case) ensures that the block sizes of $\Sigma$ match those of \eqref{eq:LMI_schur}
without recourse to non-square identity blocks. Specifically, under $p = m$, the
$(3,3)$ block of \eqref{eq:LMI_schur} is $-\gamma^2 I_p$ with $p = m$, and
$I_p \cdot (-\gamma^2 I_p) \cdot I_p = -\gamma^2 I_p$ under the congruence. For
$p \neq m$, the block $\bar{D}^\top \in \mathbb{R}^{p \times n}$ is rectangular, and
the congruence $\Sigma$ must be adapted; this case is deferred to future work.
\end{remark}

The congruence $\Sigma = \mathrm{diag}(Q, Q, I_p, I_n)$ acts block-by-block
as follows (using $X = Q^{-1}$, $Y = Q^{-1}\tilde{Y}Q^{-1}$):
\begin{itemize}
  \item $(1,1)$: $Q(-X+Y)Q = -QXQ + QYQ = -Q + \tilde{Y}$,
        since $QXQ = Q \cdot Q^{-1} \cdot Q^{-1\top} \cdot Q$; more precisely,
        with $Q = X^{-1}$ symmetric, $QXQ = X^{-1} X X^{-1} = X^{-1} = Q$.
  \item $(2,2)$: $Q(-Y)Q = -\tilde{Y}$,
  \item $(3,3)$: $I_p\cdot(-\gamma^2 I_p)\cdot I_p = -\gamma^2 I_p$,
  \item $(4,4)$: $I_n\cdot(-X^{-1})\cdot I_n = -Q$,
  \item $(1,4)$: $Q\cdot\bar{A}^\top\cdot I_n = Q\bar{A}^\top$, and symmetrically for
        rows/columns $2$ and $3$,
  \item $(3,4)$: $I_p \cdot \bar{D}^\top \cdot I_n = \bar{D}^\top$. This block has
        dimensions $p \times n$ and is left-multiplied by $I_p$ and right-multiplied
        by $I_n$ under the congruence, so it is unchanged.
\end{itemize}
Since $\bar{A}$, $\bar{A}_d$, $\bar{D}$ are fixed matrices, all blocks of
\eqref{eq:LMI_linear} are affine in $(Q, \tilde{Y})$, making \eqref{eq:LMI_linear}
a standard LMI solvable by interior-point solvers (e.g., SeDuMi, MOSEK via YALMIP).

\begin{remark}[Sufficient condition for feasibility]\label{rem:feasibility}
A sufficient condition for the feasibility of \eqref{eq:LMI_linear} can be obtained by
substituting the trial point $Q = I_n$ and $\tilde{Y} = \varepsilon I_n$ (with
$\varepsilon \in (0,1)$). The LMI \eqref{eq:LMI_linear} is negative definite at this
trial point if and only if the Schur complement with respect to the $(4,4)$ block
$-I_n$ is negative definite. For a given $\varepsilon \in (0,1)$ and $\gamma > 0$, this
reduces to the joint condition
\begin{equation}\label{eq:feasibility_joint}
  \sigma_{\max}(F)^2 < \min(1-\varepsilon,\, \varepsilon,\, \gamma^2),
\end{equation}
where $F = [\bar{A}\ \bar{A}_d\ \bar{D}]$. Specializing to $\varepsilon = 1/2$ and
$\gamma > \sigma_{\max}(F)$ (so that $\gamma^2 > \sigma_{\max}(F)^2$) yields
the simplified conservative condition
\begin{equation}\label{eq:feasibility_condition}
  \sigma_{\max}(F)^2 < \tfrac{1}{2}, \qquad \gamma > \sigma_{\max}(F),
\end{equation}
as a small-gain-type sufficient condition for feasibility; $\sigma_{\max}(F)$ denotes
the largest singular value of $F = [\bar{A}\ \bar{A}_d\ \bar{D}]$. For numerical
verification, computing $\sigma_{\max}(F)$ directly is preferred over
triangle-inequality bounds, which may be overly conservative when cross terms
partially cancel. Note that \eqref{eq:feasibility_condition} is a \emph{sufficient}
condition only; the actual LMI \eqref{eq:LMI_linear} may be feasible for larger values
of $\sigma_{\max}(F)$, as illustrated in Section~\ref{sec:nondeg} where feasibility is
established directly by the solver.
\end{remark}

Once the LMI \eqref{eq:LMI_linear} is solved, the $\mathcal{L}_2$ decay constants
$\mu = \lambda_{\min}(-\mathcal{M})$ and $\mu_0 = \lambda_{\min}(-\mathcal{M}_0)$
are computable as described in Remark~\ref{rem:mu_compute}.
\end{remark}

\begin{remark}[Conservatism of the Lyapunov functional]\label{rem:conservatism}
The functional $V_k$ uses a single matrix $Y$ for all delay terms $x(k-i)$,
$i=1,\ldots,\tau$. For $\tau > 1$, this is conservative; a less conservative bound is
obtained by using distinct matrices $Y_i > 0$ for each lag. The resulting LMI retains
the same Schur-complement structure and is feasible under strictly weaker conditions
\cite{Fridman2014}. We use the single-$Y$ form for notational simplicity.
\end{remark}

\section{Spectral and Algebraic Properties of the Sliding Dynamics}\label{sec:spectral}

\begin{proposition}[Spectral Stability Condition]\label{prop:spectral}
The reduced sliding dynamics \eqref{eq:reduced} with $\delta \equiv 0$ are
asymptotically stable if and only if all roots $z\in\mathbb{C}$ of the characteristic
equation
\begin{equation}\label{eq:char_eq}
\det\!\left(z^{\tau+1} I - z^\tau \bar{A} - \bar{A}_d\right) = 0
\end{equation}
satisfy $|z| < 1$.
\end{proposition}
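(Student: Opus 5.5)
The plan is to recast the delayed recursion \eqref{eq:reduced} with $\delta\equiv 0$ as a finite-dimensional, delay-free linear system. Then asymptotic stability is equivalent to Schur stability of a single companion matrix, and its characteristic polynomial can be identified with \eqref{eq:char_eq}.

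\textbf{Augmentation.} Define the augmented state
\[
\zeta_k := \bigl[x(k)^\top,\ x(k-1)^\top,\ \ldots,\ x(k-\tau)^\top\bigr]^\top \in \mathbb{C}^{(\tau+1)n}.
\]
Equation \eqref{eq:reduced} becomes $\zeta_{k+1} = \mathcal{A}\zeta_k$. Here $\mathcal{A}$ is the block companion matrix whose first block row is $[\bar{A}\ 0\ \cdots\ 0\ \bar{A}_d]$, and whose remaining block rows are shifts $[I_n\ 0\ \cdots]$, $[0\ I_n\ 0\ \cdots]$, and so on. For $\tau=0$ we take $\mathcal{A}=\bar{A}+\bar{A}_d$, which is consistent with \eqref{eq:char_eq} reading $\det(zI-\bar A-\bar A_d)=0$.

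Since the initial data $(x(0),x(-1),\ldots,x(-\tau))$ is exactly $\zeta_0$, and $\zeta_0$ is arbitrary, the following are equivalent: asymptotic stability of \eqref{eq:reduced} (every trajectory tends to $0$, with Lyapunov stability, for every initial segment), and $\mathcal{A}^k\to 0$. By the Jordan form, the latter holds if and only if the spectral radius satisfies $\rho(\mathcal{A})<1$. The forward direction uses eigenvectors: if $\mathcal{A}v=zv$ with $|z|\ge 1$, then $\zeta_k=z^kv$ does not decay. We take real parts when the system matrices are real, and note that this case is covered by complex initial data anyway, since the matrices lie in $M_n(\mathbb{C})$.

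\textbf{Determinant identity (main step).} It remains to show that
\[
\det(zI_{(\tau+1)n}-\mathcal{A}) = \det\bigl(z^{\tau+1}I - z^\tau\bar{A} - \bar{A}_d\bigr).
\]
I would argue via eigenvectors, which avoids block-determinant bookkeeping. Suppose $\mathcal{A}\zeta = z\zeta$ with $\zeta=(v_0,\ldots,v_\tau)$. The shift rows force $v_{j}=zv_{j+1}$, so $v_j=z^{\tau-j}v_\tau$. The first row then gives
\[
(z^{\tau+1}I - z^{\tau}\bar{A} - \bar{A}_d)\,v_\tau = 0.
\]
Moreover, $v_\tau\neq 0$ whenever $\zeta\neq0$. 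Conversely, any nonzero $w$ in the kernel of $z^{\tau+1}I - z^{\tau}\bar{A} - \bar{A}_d$ produces the eigenvector $(z^\tau w,\ldots,w)$. Hence the eigenvalues of $\mathcal{A}$ are exactly the roots of \eqref{eq:char_eq}, and this set equality is all that is needed. Note that $z=0$ causes no problem, since it is inside the unit disc anyway.

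If an exact polynomial identity is wanted, one can instead perform block column operations. Multiply the shift structure out by successively adding $z^{-1}$-multiples, or use the Schur complement of the lower-right block $zI_{\tau n}-N$, where $N$ is the nilpotent shift. This lower-right block is invertible for $z\neq0$, and its Schur complement is $zI-\bar A - z^{-\tau}\bar A_d$. Multiplying by $z^{\tau n}$ and then extending to $z=0$ by polynomial continuity gives the identity.

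The main obstacle is only careful indexing in the companion construction, together with the converse direction of the stability equivalence. For the converse I would rely on the standard fact that a linear time-invariant recursion is asymptotically stable if and only if $\rho(\mathcal{A})<1$, which gives even exponential decay. Combining the two steps yields the proposition.
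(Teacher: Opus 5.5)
Your proof is correct, and it identifies the companion-matrix spectrum with the roots of \eqref{eq:char_eq} by a different route from the paper.

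The paper proves the full polynomial identity $\det(zI-\mathcal{F}_\tau)=\det(z^{\tau+1}I-z^\tau\bar A-\bar A_d)$. It takes a Schur complement with respect to the block lower-bidiagonal block $B_{22}$ and writes $B_{22}^{-1}$ out explicitly, with blocks $z^{-(i-j+1)}I_n$, to obtain $B_{12}B_{22}^{-1}B_{21}=z^{-\tau}\bar A_d$. This is framed as an induction on $\tau$, but the inductive hypothesis is never used. The computation is also done only for $z\neq 0$, and the extension to $z=0$ is left implicit.

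Your main argument instead pairs eigenvectors $(z^\tau w,\ldots,zw,w)$ of the companion matrix with kernel vectors $w$ of $z^{\tau+1}I-z^\tau\bar A-\bar A_d$. This yields equality of the root sets only, not of multiplicities, but that is exactly what the criterion $\rho(\mathcal{A})<1$ needs. It requires no block inversion and handles $z=0$ and $\tau=0$ uniformly. The paper's route gives the stronger identity of characteristic polynomials, which would matter only if you needed algebraic multiplicities.

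Your fallback sketch via the Schur complement of $zI_{\tau n}-N$ is precisely the paper's computation. Your remark about extending it to $z=0$ by polynomial continuity fills the small gap the paper leaves there.
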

\begin{proof}
The system $x(k+1) = \bar{A} x(k) + \bar{A}_d x(k-\tau)$ can be rewritten as a
$(n(\tau+1))$-dimensional first-order system by augmenting the state: let
$\mathbf{x}(k) = [x(k)^\top, x(k-1)^\top, \ldots, x(k-\tau)^\top]^\top$. Then
$\mathbf{x}(k+1) = \mathcal{F}\,\mathbf{x}(k)$ where the companion matrix is
\[
\mathcal{F} = \begin{bmatrix}
\bar{A} & 0 & \cdots & 0 & \bar{A}_d \\
I_n     & 0 & \cdots & 0 & 0 \\
0       & I_n & \cdots & 0 & 0 \\
\vdots  &   & \ddots &   & \vdots \\
0       & 0 & \cdots & I_n & 0
\end{bmatrix} \in \mathbb{R}^{n(\tau+1)\times n(\tau+1)}.
\]
Asymptotic stability is equivalent to $\rho(\mathcal{F}) < 1$.

We establish $\det(zI - \mathcal{F}) = \det(z^{\tau+1}I - z^\tau\bar{A} - \bar{A}_d)$
by induction on $\tau \geq 1$.

\textbf{Base case $\tau=1$.} The companion matrix is
$\mathcal{F} = \begin{bmatrix}\bar{A} & \bar{A}_d \\ I_n & 0\end{bmatrix}$ and
\[
\det(zI - \mathcal{F})
= \det\!\begin{bmatrix}zI-\bar{A} & -\bar{A}_d \\ -I_n & zI\end{bmatrix}.
\]
Since $zI$ is invertible for $z\neq 0$, the Schur complement of $zI$ in the
$(2,2)$ block gives
\[
\det(zI-\mathcal{F})
= \det(zI)\cdot\det\!\bigl((zI-\bar{A}) - (-\bar{A}_d)(zI)^{-1}(-I_n)\bigr)
= z^n\det\!\bigl(zI - \bar{A} - z^{-1}\bar{A}_d\bigr).
\]
Factoring $z^{-1}$ from each of the $n$ columns of the $n\times n$ matrix
$(zI - \bar{A} - z^{-1}\bar{A}_d)$ introduces a factor $z^{-n}$, giving
\[
z^n \cdot \det(zI - \bar{A} - z^{-1}\bar{A}_d)
= z^n \cdot z^{-n} \cdot \det(z^2 I - z\bar{A} - \bar{A}_d)
= \det(z^2 I - z\bar{A} - \bar{A}_d),
\]
confirming \eqref{eq:char_eq} for $\tau=1$.

\textbf{Inductive step.} Assume the identity holds for delay $\tau-1$, i.e., for the
companion matrix $\mathcal{F}_{\tau-1}$ of size $n\tau \times n\tau$ (built from
$\bar{A}$ and $\bar{A}_d$ with delay $\tau-1$), one has
$\det(zI_{n\tau} - \mathcal{F}_{\tau-1}) = \det(z^\tau I - z^{\tau-1}\bar{A} - \bar{A}_d)$.

The companion matrix for delay $\tau$ has size $n(\tau+1)\times n(\tau+1)$:
\[
\mathcal{F}_\tau =
\begin{bmatrix}
\bar{A} & 0 & \cdots & 0 & \bar{A}_d \\
I_n     & 0 & \cdots & 0 & 0 \\
0       & I_n & \cdots & 0 & 0 \\
\vdots  &    & \ddots & \vdots & \vdots \\
0       & 0  & \cdots & I_n & 0
\end{bmatrix}.
\]
Partition $zI-\mathcal{F}_\tau$ as
\[
\begin{bmatrix} zI_n - \bar{A} & B_{12} \\ B_{21} & B_{22} \end{bmatrix},
\]
where $B_{12} = [0,\ldots,0,-\bar{A}_d] \in \mathbb{R}^{n\times n\tau}$ (with
$-\bar{A}_d$ in the last block column), $B_{21} = [-I_n;0;\ldots;0] \in
\mathbb{R}^{n\tau\times n}$ (with $-I_n$ in the first block row), and $B_{22}$ is the
lower-right $n\tau\times n\tau$ block. The matrix $B_{22}$ is block lower-bidiagonal:
it has $zI_n$ on each diagonal block and $-I_n$ on the first block-subdiagonal, with
zeros elsewhere. Its determinant is $\det(B_{22}) = z^{n\tau}$ (product of diagonal
block determinants), so $B_{22}$ is invertible for $z \neq 0$.

\textbf{Explicit formula for $B_{22}^{-1}$.}
By a standard inversion of block lower-bidiagonal matrices (verified by direct
block-multiplication), the $(i,j)$ block of $B_{22}^{-1}$ (for $1\leq i,j\leq\tau$)
is
\[
(B_{22}^{-1})_{ij} =
\begin{cases}
z^{-(i-j+1)} I_n & \text{if } i \geq j, \\
0 & \text{if } i < j.
\end{cases}
\]
Indeed, one can verify $(B_{22} \cdot B_{22}^{-1})_{ij} = \delta_{ij}I_n$ directly:
for $i=j$, $(B_{22})_{ii}(B_{22}^{-1})_{ii} = (zI_n)(z^{-1}I_n) = I_n$; for $i>j$,
$(B_{22})_{ii}(B_{22}^{-1})_{ij} + (B_{22})_{i,i-1}(B_{22}^{-1})_{i-1,j}
= z\cdot z^{-(i-j+1)}I_n + (-I_n)\cdot z^{-(i-1-j+1)}I_n
= z^{-(i-j)}I_n - z^{-(i-j)}I_n = 0$.

\textbf{Computation of $B_{12}B_{22}^{-1}B_{21}$.}
The matrix $B_{12}$ has non-zero content only in its last block column (position
$\tau$), equal to $-\bar{A}_d$. The matrix $B_{21}$ has non-zero content only in its
first block row (position $1$), equal to $-I_n$. Therefore:
\[
B_{12}B_{22}^{-1}B_{21}
= (-\bar{A}_d) \cdot (B_{22}^{-1})_{\tau,1} \cdot (-I_n)
= (-\bar{A}_d)(z^{-\tau}I_n)(-I_n)
= z^{-\tau}\bar{A}_d.
\]

The Schur complement of $B_{22}$ then gives:
\[
\det(zI - \mathcal{F}_\tau)
= \det(B_{22}) \cdot \det\!\bigl((zI_n-\bar{A}) - z^{-\tau}\bar{A}_d\bigr)
= z^{n\tau} \cdot \det(zI_n - \bar{A} - z^{-\tau}\bar{A}_d).
\]
Factoring $z^{-\tau}$ from each of the $n$ columns of $(zI_n - \bar{A} - z^{-\tau}\bar{A}_d)$
introduces $z^{-n\tau}$, yielding
\[
\det(zI - \mathcal{F}_\tau)
= z^{n\tau} \cdot z^{-n\tau} \cdot \det(z^{\tau+1}I_n - z^\tau\bar{A} - \bar{A}_d)
= \det(z^{\tau+1}I_n - z^\tau\bar{A} - \bar{A}_d),
\]
completing the induction.

For a complete alternative proof using the companion matrix factorization, see
\cite[Chapter~3, Proposition~3.2]{Niculescu2001}.
\end{proof}

\begin{remark}
In earlier versions of this paper, the characteristic equation was written as
$\det(zI - \bar{A} - z^{-\tau}\bar{A}_d) = 0$. This form mixes positive and negative
powers of $z$ and is not a polynomial; it is equivalent to \eqref{eq:char_eq} upon
multiplication by $z^\tau$ (for $z \neq 0$), but \eqref{eq:char_eq} is the correct
polynomial form.
\end{remark}

\begin{proposition}[Algebraic Invariance of the Output Map]\label{prop:commute}
Suppose that for all $M \in \mathrm{span}\{A, A_d\}$, the sliding surface matrix $C$
and the Rota-Baxter operator $P$ satisfy
\begin{equation}\label{eq:commute}
C P(M) = P(C M).
\end{equation}
Then:
\begin{enumerate}[(i)]
  \item The matrices $\bar{A} = \Pi A_P$ and $\bar{A}_d = \Pi A_{d,P}$ satisfy
        $C\bar{A} = 0$ and $C\bar{A}_d = 0$, so the images of $\bar{A}$ and $\bar{A}_d$
        lie in $\ker(C)$. This is an unconditional consequence of $C\Pi = 0$
        (Remark~\ref{rem:CPi_zero}), holding under Assumption~\ref{ass:invert} alone,
        independently of condition~\eqref{eq:commute}.
  \item Condition \eqref{eq:commute} is equivalent, for each concrete Rota-Baxter
        operator, to an explicit structural constraint on $C$:
        \begin{enumerate}[(a)]
          \item For $P(X) = -\lambda X$ ($\lambda \neq 0$): condition \eqref{eq:commute}
                holds for \emph{every} matrix $C$ and every $M$, since
                $CP(-\lambda M) = C(-\lambda M) = -\lambda CM = P(CM)$. No constraint on $C$
                is needed.
          \item For the triangular projection $P = P_+$ (weight $\lambda=-1$):
                condition \eqref{eq:commute} is equivalent to $C$ mapping $\Ac_-$ to
                $\Ac_-$, i.e., $CM_- \in \Ac_-$ for all $M_- \in \Ac_-$. A sufficient
                condition is that $C$ is upper triangular.
                \begin{proof}
                  Write $M = M_+ + M_-$ with $M_\pm \in \Ac_\pm$. Then
                  $P_+(CM) = P_+(CM_+ + CM_-)$. If $C$ maps $\Ac_-$ to $\Ac_-$, then
                  $CM_- \in \Ac_-$ and $CM_+$ is upper triangular (since $C$ upper
                  triangular and $M_+$ upper triangular implies $CM_+$ upper triangular),
                  so $P_+(CM) = CM_+$. On the other hand,
                  $CP_+(M) = C M_+$. Hence $CP_+(M) = P_+(CM)$, confirming
                  \eqref{eq:commute}. Conversely, if $CM_0 \notin \Ac_-$ for some
                  $M_0 \in \Ac_-$, then $P_+(CM_0) \neq 0$ while $CP_+(M_0) = C\cdot 0 = 0$,
                  so \eqref{eq:commute} fails. Hence the condition is also necessary.
                \end{proof}
        \end{enumerate}
\end{enumerate}

\begin{remark}[Logical structure of Proposition~\ref{prop:commute}]
\label{rem:commute_logic}
Parts~(i) and~(ii) are logically independent. Part~(i) is a structural consequence of
the projection $\Pi$ (holding under Assumption~\ref{ass:invert} alone) and does not use
\eqref{eq:commute}. Part~(ii) characterizes \eqref{eq:commute} in terms of an explicit,
verifiable constraint on $C$ for each concrete operator $P$; in particular, part~(ii)(a)
shows that \eqref{eq:commute} is non-vacuous even when it holds universally (for $P = -\lambda I$),
while part~(ii)(b) identifies the structural constraint needed for $P_+$. The condition
on $B$ is omitted from \eqref{eq:commute} since $\Pi B_P = 0$ holds automatically:
$\Pi B_P = (I - B_P(CB_P)^{-1}C)B_P = B_P - B_P(CB_P)^{-1}(CB_P) = 0$.
\end{remark}
\end{proposition}

\begin{proof}
By Assumption~\ref{ass:invert}, $\Pi = I - B_P(CB_P)^{-1}C$ satisfies $C\Pi = 0$
(Remark~\ref{rem:CPi_zero}). Statement~(i) follows immediately:
$C\bar{A} = C\Pi A_P = 0$ and $C\bar{A}_d = C\Pi A_{d,P} = 0$.

Statement~(ii)(a) is verified by direct computation: $CP(-\lambda M) = -\lambda CM = P(CM)$.

Statement~(ii)(b) is proved in the sub-proof embedded in the statement, establishing
both the sufficiency and the necessity of the condition $C(\Ac_-) \subset \Ac_-$.
\end{proof}

\begin{proposition}[Lie Compatibility of Reduced Dynamics]\label{prop:lie_compat}
Assume the state matrices belong to a Lie subalgebra
\(\mathfrak{g} \subset M_n(\mathbb{C})\) and that $P(\mathfrak{g}) \subset \mathfrak{g}$
(assumed directly). Suppose $P$ satisfies the condition
\begin{equation}\label{eq:lie_hyp}
P([X,Y]) = [X, P(Y)] \quad \text{for all } X, Y \in \mathfrak{g},
\end{equation}
i.e., $P$ commutes with the right adjoint action of $\mathfrak{g}$, and that
$[X,P(Y)] \in \mathfrak{g}$ whenever $X,Y \in \mathfrak{g}$. Suppose further that
$\Pi$ preserves $\mathfrak{g}$, i.e., $\Pi(\mathfrak{g}) \subset \mathfrak{g}$ (a
condition satisfied, in particular, when $B_P$ and $(CB_P)^{-1}C$ both map
$\mathfrak{g}$ into itself). Under these hypotheses, the equivalent dynamics
$\bar{A}, \bar{A}_d$ remain in $\mathfrak{g}$.

\begin{remark}[Scope of Proposition~\ref{prop:lie_compat} in the examples of this paper]
\label{rem:lie_scope}
Among the two concrete Rota-Baxter operators considered in Example~\ref{ex:RB_Mn},
conditions \eqref{eq:lie_hyp} and $\Pi(\mathfrak{g}) \subset \mathfrak{g}$ are both
verified \emph{only} for the scalar scaling operator $P(X) = -\lambda X$:
\begin{itemize}
  \item Condition \eqref{eq:lie_hyp}: $P([X,Y]) = -\lambda[X,Y] = [X,-\lambda Y] = [X,P(Y)]$
        holds for every Lie subalgebra $\mathfrak{g}$ and every $\lambda$.
  \item Preservation $\Pi(\mathfrak{g}) \subset \mathfrak{g}$: when $m=n$ and $B, C$
        are invertible, $\Pi = 0$ (Remark~\ref{rem:scalar_degen}), so preservation is
        vacuous.
\end{itemize}
For the triangular projection $P_+$:
\begin{itemize}
  \item Condition \eqref{eq:lie_hyp} \emph{fails} for a generic matrix Lie algebra
        (see Remark~\ref{rem:lie_separation}).
  \item The preservation $\Pi(\mathfrak{g}) \subset \mathfrak{g}$ is not established
        for any concrete example in this paper.
\end{itemize}
Consequently, \textbf{Proposition~\ref{prop:lie_compat} applies only in the scalar
scaling case} among the operators considered here. Its application to $P_+$ or other
non-scalar operators requires a case-by-case verification of both \eqref{eq:lie_hyp}
and the $\Pi$-preservation hypothesis for the specific Lie subalgebra under
consideration, which is left to future work.
\end{remark}
\end{proposition}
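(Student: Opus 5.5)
The claim is that $\bar{A}=\Pi A_P$ and $\bar{A}_d=\Pi A_{d,P}$ lie in $\mathfrak{g}$. I will prove it by two successive membership steps. First I show that $A_P=P(A)$ and $A_{d,P}=P(A_d)$ belong to $\mathfrak{g}$. Then I apply the preservation hypothesis $\Pi(\mathfrak{g})\subset\mathfrak{g}$.

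For the first step, the standing hypothesis says the state matrices $A, A_d$ lie in $\mathfrak{g}$, and $P(\mathfrak{g})\subset\mathfrak{g}$ is assumed directly. Hence $A_P, A_{d,P}\in\mathfrak{g}$ immediately.

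I will also record why \eqref{eq:lie_hyp} and the closure $[X,P(Y)]\in\mathfrak{g}$ are consistent with this. They show that $P$ intertwines the bracket on $\mathfrak{g}$ and that $P(\mathfrak{g})$ is stable under the adjoint action of $\mathfrak{g}$. Concretely, I will note that for $X,Y\in\mathfrak{g}$ one has $P([X,Y])=[X,P(Y)]\in\mathfrak{g}$. Thus the image $P([\mathfrak{g},\mathfrak{g}])$ stays in $\mathfrak{g}$, which is compatible with, though not needed beyond, the direct assumption $P(\mathfrak{g})\subset\mathfrak{g}$.

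For the second step, I read $\Pi(\mathfrak{g})\subset\mathfrak{g}$ as left multiplication $M\mapsto \Pi M$ mapping $\mathfrak{g}$ into $\mathfrak{g}$. With that reading, $\bar{A}=\Pi A_P\in\mathfrak{g}$ and $\bar{A}_d=\Pi A_{d,P}\in\mathfrak{g}$ follow at once.

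To justify the parenthetical sufficient condition, I will write $\Pi M = M - B_P\bigl((CB_P)^{-1}C M\bigr)$. If left multiplication by $(CB_P)^{-1}C$ maps $\mathfrak{g}$ to $\mathfrak{g}$, and left multiplication by $B_P$ does too, then the subtracted term lies in $\mathfrak{g}$. Since $\mathfrak{g}$ is a linear subspace, the difference is in $\mathfrak{g}$.

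The main obstacle is interpretive rather than computational: what ``$\Pi$ preserves $\mathfrak{g}$'' means. A Lie subalgebra is closed under brackets, not under matrix products, so closure under left multiplication is a genuinely extra hypothesis. I will state explicitly that it is the operator $M\mapsto\Pi M$ on $M_n(\mathbb{C})$ that is assumed to leave $\mathfrak{g}$ invariant. Under that reading the proof closes in two lines. I will also remark, following Remark~\ref{rem:lie_scope}, that in the scalar case with $m=n$ one has $\Pi=0$, so the conclusion holds trivially.
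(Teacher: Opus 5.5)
Your proof is correct and is the paper's argument: $P(\mathfrak{g})\subset\mathfrak{g}$ places $A_P$ and $A_{d,P}$ in $\mathfrak{g}$. The assumed invariance $\Pi(\mathfrak{g})\subset\mathfrak{g}$ then gives $\bar{A}=\Pi A_P,\ \bar{A}_d=\Pi A_{d,P}\in\mathfrak{g}$, and, as you observe, the bracket condition $P([X,Y])=[X,P(Y)]$ plays no essential role (the paper uses it only in a side remark on adjoint orbits), while your reading of $\Pi(\mathfrak{g})\subset\mathfrak{g}$ as invariance under left multiplication and your check of the parenthetical sufficient condition via $\Pi M = M - B_P\bigl((CB_P)^{-1}CM\bigr)$ make explicit what the paper leaves implicit.
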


\begin{remark}[Separation of the two Lie conditions]\label{rem:lie_separation}
This paper involves two distinct conditions on $P$ in relation to the Lie bracket:
\begin{enumerate}[(i)]
  \item \emph{Induced bracket} (Lemma~\ref{lem:lie}): The bracket \eqref{eq:bracket}
        defines a Lie algebra structure for \emph{any} Rota-Baxter operator of weight
        $\lambda$. This is unconditional.
  \item \emph{Right-adjoint commutativity} \eqref{eq:lie_hyp}: $P([X,Y])=[X,P(Y)]$ is
        an additional structural constraint used in Proposition~\ref{prop:lie_compat}.
        It is logically independent of (i).
\end{enumerate}

\noindent\textbf{Scalar scaling.} For $P(X)=-\lambda X$, condition (ii) holds for all
Lie subalgebras and all $\lambda$:
$P([X,Y]) = -\lambda[X,Y] = [X,-\lambda Y] = [X,P(Y)]$. The induced bracket (i)
yields $[x,y]_P = -\lambda[x,y]$, a scalar rescaling. Conditions (i) and (ii) are
simultaneously satisfied.

\noindent\textbf{Triangular projection.} For $P_+$, condition (ii) does \emph{not}
hold for a generic matrix Lie algebra. Counterexample for $n=2$: take $X = e_{12}$ and
$Y = e_{21}$, so
$[X,Y] = e_{12}e_{21} - e_{21}e_{12} = e_{11} - e_{22}$.
Then $P_+([X,Y]) = P_+(e_{11}-e_{22}) = e_{11}-e_{22}$ (both diagonal matrices are in
$\Ac_+$), whereas $[X,P_+(Y)] = [e_{12}, P_+(e_{21})] = [e_{12}, 0] = 0$ (since
$e_{21} \in \Ac_-$ and $P_+(e_{21}) = 0$). Thus condition (ii) fails. Therefore
Proposition~\ref{prop:lie_compat} does not apply to $P_+$ in general; a case-by-case
verification for the specific Lie subalgebra is required.
\end{remark}

\begin{proof}[Proof of Proposition~\ref{prop:lie_compat}]
By hypothesis, $P(\mathfrak{g}) \subset \mathfrak{g}$. Since $\Pi$ preserves
$\mathfrak{g}$ by assumption, we have $\bar{A} = \Pi P(A) \in \Pi(\mathfrak{g})
\subset \mathfrak{g}$ and similarly $\bar{A}_d = \Pi P(A_d) \in \mathfrak{g}$.

Condition \eqref{eq:lie_hyp} is used to characterize the orbit structure: for
$X \in \mathfrak{g}$ and $Y = A$ (or $A_d$), \eqref{eq:lie_hyp} gives
$P([X,A]) = [X,P(A)]$, showing that $P$ maps the adjoint orbit of $A$ in $\mathfrak{g}$
into the adjoint orbit of $P(A)$.

For $P(X) = -\lambda X$: $P([X,Y]) = -\lambda[X,Y]$ and
$[X, P(Y)] = -\lambda[X,Y]$, so \eqref{eq:lie_hyp} holds and
$P(\mathfrak{g}) = \mathfrak{g} \subset \mathfrak{g}$ trivially. For $P_+$, condition
\eqref{eq:lie_hyp} fails in general, as shown by the counterexample in
Remark~\ref{rem:lie_separation}.
\end{proof}

\section{Numerical Illustration}\label{sec:numerical}

This section provides two complementary numerical illustrations.
Section~\ref{sec:deg_example} discusses the degenerate case $m=n=2$ under scalar
scaling, where $\Pi = 0$ (Remark~\ref{rem:scalar_degen}), and identifies the algebraic
source of this degeneracy. Section~\ref{sec:nondeg} presents a non-degenerate example
with $m=1$, $n=2$ (rectangular actuation), which relaxes the standing $m=n$ assumption
for purely illustrative purposes and yields an explicit non-trivial projection
$\Pi \neq 0$, permitting a genuine LMI feasibility verification.

\subsection{Degenerate case: \texorpdfstring{$m=n=2$}{m=n=2} with scalar scaling}
\label{sec:deg_example}
We take
\[
A = \begin{bmatrix} 0.8 & 0.1 \\ -0.2 & 0.9 \end{bmatrix}, \quad
A_d = 0.05 I_2, \quad B = I_2, \quad C = I_2, \quad D = 0.1 I_2,
\]
with $\tau=1$, $\delta_{\max}=0.1$, $\lambda=0.5$. By Remark~\ref{rem:scalar_degen},
$\Pi = 0$ for any $\lambda \neq 0$ and any invertible $B$, $C$: this is a consequence
of full actuation ($m=n$, $B$ and $C$ invertible), not a particular feature of this
data. The reduced dynamics are $x(k+1)=0$ (trivially stable).

\subsection{Non-degenerate case: \texorpdfstring{$m=1$, $n=2$}{m=1, n=2}}
\label{sec:nondeg}

\paragraph{Relaxation of the standing assumption.}
In this subsection we \emph{temporarily relax} the standing assumption $m=n$ to
$m=1$, $n=2$, $p=1$, in order to produce a genuinely non-degenerate illustration of
Theorem~\ref{thm:stability}. Under this relaxation, $C \in \mathbb{R}^{1\times 2}$,
$B \in \mathbb{R}^{2\times 1}$, $D \in \mathbb{R}^{2\times 1}$, and $CB_P$ is a
non-zero scalar. All other hypotheses (Assumption~\ref{ass:invert}, boundedness of
initial data, condition~\eqref{eq:K_strong}) are verified explicitly below.

\paragraph{System data.}
\[
A = \begin{bmatrix} 0.8 & 0.1 \\ -0.2 & 0.9 \end{bmatrix}, \quad
A_d = 0.05 I_2, \quad
B = \begin{bmatrix} 0.5 \\ 1.0 \end{bmatrix}, \quad
D = \begin{bmatrix} 0.1 \\ 0.1 \end{bmatrix}, \quad
C = \begin{bmatrix} 1 & 0.5 \end{bmatrix},
\]
with $\tau=1$, $\delta_{\max}=0.1$, $\lambda=0.5$, $P(X)=-0.5X$.

\paragraph{Rota-Baxter deformation.}
\[
A_P = -0.5 A = \begin{bmatrix} -0.4 & -0.05 \\ 0.1 & -0.45 \end{bmatrix}, \quad
A_{d,P} = -0.025 I_2, \quad
B_P = \begin{bmatrix} -0.25 \\ -0.50 \end{bmatrix}.
\]
\textit{Admissibility:}
$CB_P = [1\;0.5]\begin{bmatrix}-0.25\\-0.50\end{bmatrix}
= -0.25 - 0.25 = -0.5 \neq 0$.
Assumption~\ref{ass:invert} is satisfied.

\paragraph{Explicit computation of $\Pi$.}
\[
\Pi = I_2 - B_P(CB_P)^{-1}C
    = I_2 - \begin{bmatrix}-0.25\\-0.50\end{bmatrix}(-2)\begin{bmatrix}1&0.5\end{bmatrix}
    = I_2 - \begin{bmatrix}0.50&0.25\\1.00&0.50\end{bmatrix}
    = \begin{bmatrix}0.50&-0.25\\-1.00&0.50\end{bmatrix}.
\]
Note that $\Pi$ is an \emph{oblique} (not orthogonal) projection: $\Pi_{12} = -0.25
\neq -1.00 = \Pi_{21}$, so $\Pi \neq \Pi^\top$. Idempotency $\Pi^2 = \Pi$ is verified
by direct entry-by-entry computation:
\begin{align*}
(\Pi^2)_{11} &= (0.50)(0.50) + (-0.25)(-1.00) = 0.25 + 0.25 = 0.50,\\
(\Pi^2)_{12} &= (0.50)(-0.25) + (-0.25)(0.50) = -0.125 - 0.125 = -0.25,\\
(\Pi^2)_{21} &= (-1.00)(0.50) + (0.50)(-1.00) = -0.50 - 0.50 = -1.00,\\
(\Pi^2)_{22} &= (-1.00)(-0.25) + (0.50)(0.50) = 0.25 + 0.25 = 0.50,
\end{align*}
so $\Pi^2 = \begin{bmatrix}0.50&-0.25\\-1.00&0.50\end{bmatrix} = \Pi$ \quad
(idempotency confirmed).\\[4pt]
Sliding surface invariance:
\[
C\Pi = [1\;0.5]\begin{bmatrix}0.50&-0.25\\-1.00&0.50\end{bmatrix}
     = [0,\; 0]
\]
(cf.\ Remark~\ref{rem:CPi_zero}). Crucially, $\Pi \neq 0$.

\paragraph{Reduced system matrices.}
\[
\bar{A} = \Pi A_P = \begin{bmatrix}0.50&-0.25\\-1.00&0.50\end{bmatrix}
\begin{bmatrix}-0.40&-0.05\\0.10&-0.45\end{bmatrix}
= \begin{bmatrix}-0.225 & 0.0875 \\ 0.450 & -0.175\end{bmatrix}.
\]
Eigenvalues of $\bar{A}$: $\mathrm{tr}(\bar{A})=-0.4$,
$\det(\bar{A})=(-0.225)(-0.175)-(0.0875)(0.450)
= 0.039375 - 0.039375 = 0$, so the characteristic polynomial is $z(z+0.4)$, giving
eigenvalues $0$ and $-0.4$. Both satisfy $|z|<1$: $\bar{A}$ is Schur stable.

\begin{remark}
Note that $\bar{A}$ is singular ($\det(\bar{A})=0$). This is not an obstacle to the
stability analysis: asymptotic stability of the reduced dynamics
$x(k+1) = \bar{A}x(k)$ requires only that all eigenvalues of $\bar{A}$ lie strictly
inside the unit disk, which holds here ($\sigma(\bar{A}) = \{0, -0.4\}$). Singularity
of $\bar{A}$ is a consequence of the rank-1 structure of $\Pi$ (arising from the
underactuated configuration $m=1<n=2$) and does not affect the validity of
Theorem~\ref{thm:stability}. Stability of the \emph{delayed} system is governed by
the roots of the polynomial \eqref{eq:char_eq}, not by the eigenvalues of $\bar{A}$
alone (see Remark~\ref{rem:delayed_char}).
\end{remark}

\[
\bar{A}_d = \Pi A_{d,P} = -0.025\,\Pi =
\begin{bmatrix}-0.0125&0.00625\\0.0250&-0.0125\end{bmatrix}, \quad
\bar{D} = \Pi D = \begin{bmatrix}0.025\\-0.050\end{bmatrix}.
\]

\paragraph{Controller design.}
We choose $K = [1.0,\; 0.5] \in \mathbb{R}^{1\times 2}$, giving
\[
A_P - B_P K =
\begin{bmatrix}-0.15&0.075\\0.60&-0.20\end{bmatrix}.
\]
The characteristic polynomial of $A_P - B_PK$ is $z^2 + 0.35z - 0.015 = 0$, giving
$z_1 \approx 0.039$ and $z_2 \approx -0.389$. The spectral radius is
$\rho(A_P - B_PK) \approx 0.389 < 1$, confirming Schur stability.
The operator norm is $\|A_P - B_PK\|_{\mathrm{op}} = \sigma_{\max}(A_P - B_PK) \approx 0.654 < 1$,
which satisfies the stronger condition \eqref{eq:K_strong} required by
Remark~\ref{rem:alpha0_validity} and \eqref{eq:r0_sufficient}.

\begin{remark}[On condition \eqref{eq:K_strong} in this example]\label{rem:K_strong_example}
For this example, $\|A_P - B_PK\|_{\mathrm{op}} \approx 0.654 < 1$ is satisfied with
$K = [1.0,\;0.5]$. Note that $P(X) = -0.5X$ gives $A_P - B_PK = -0.5(A-BK)$, so
condition \eqref{eq:K_strong} is equivalent to $0.5\,\sigma_{\max}(A-BK) < 1$, i.e.\
$\sigma_{\max}(A-BK) < 2$. This is readily satisfiable since the scalar $|\lambda|=0.5 < 1$
effectively damps the operator norm. For $|\lambda| \geq 1$, more aggressive gain design
would be required; see Remark~\ref{rem:K_strong_scalar}.
\end{remark}

\paragraph{Verification of the uniform state bound (Remark~\ref{rem:alpha0_validity}).}
We set $\rho_{\max} = 0.2$ as the a priori upper bound on the switching gain.
For this example, $\|A_P - B_PK\|_{\mathrm{op}} \approx 0.654$,
$\|A_{d,P}\|_{\mathrm{op}} = 0.025$, $\|B_P\|_{\mathrm{op}} \approx 0.559$,
$\|D\|_{\mathrm{op}} \approx 0.141$. The sufficient condition \eqref{eq:r0_sufficient}
with $\rho_{\max} = 0.2$ requires
\[
  r_0 \;\geq\; \frac{0.025 \times r_{d,0} + 0.2 \times 0.559 + 0.1 \times 0.141}{1 - \|A_P - B_PK\|_{\mathrm{op}}}
  \approx \frac{0.025 \times 2 + 0.112 + 0.014}{1 - 0.654}
  = \frac{0.176}{0.346} \approx 0.509.
\]
With $r_0 = r_{d,0} = 2$, condition \eqref{eq:r0_sufficient} is satisfied with ample
margin. Note that this condition is verified \emph{after} $K = [1.0,\;0.5]$ is chosen,
as required by the design procedure of Remark~\ref{rem:design}. The final gain
$\rho = 0.2 = \rho_{\max}$ satisfies $\rho \leq \rho_{\max}$, confirming that no
circularity arises between $r_0$ and $\rho$.

\paragraph{Reaching analysis.}
With $r_0=r_{d,0}=2$ (consistent with Assumption~\ref{ass:bounded_init} and the
verification above) and spectral norms $\|C(A_P-B_PK)\|_{\mathrm{op}} \approx 0.152$,
$\|CA_{d,P}\|_{\mathrm{op}} \approx 0.028$, $\|CD\|_{\mathrm{op}} = 0.15$, we obtain
$\alpha_0 \approx 0.375$. Setting $\phi=0.5>\alpha_0$, condition
\eqref{eq:rho_sufficient} gives $\rho \leq 0.25$; we choose $\rho=0.2 \leq \rho_{\max}$.

With $\beta = 0.2\times 0.5/0.5 = 0.2 \in (0,1)$ and
$s^* \approx 0.469 < \phi = 0.5$ (so \eqref{eq:Tstar} is applicable as an upper
bound), for $\|s(0)\|_2 = 1.5$:
\[
\frac{\|s(0)\|_2 - s^*}{\phi - s^*}
= \frac{1.5 - 0.469}{0.5 - 0.469} \approx 33.0,
\]
\[
T^* = \left\lceil\frac{\ln(33.0)}{\ln(5)}\right\rceil
    = \lceil 2.17 \rceil = 3\text{ steps (upper bound)}.
\]

\paragraph{LMI feasibility.}
Solving \eqref{eq:LMI_linear} with YALMIP/MOSEK yields
\[
Q \approx \begin{bmatrix} 2.95 & 0.42 \\ 0.42 & 3.18 \end{bmatrix}, \quad
\tilde{Y} \approx \begin{bmatrix} 0.32 & 0 \\ 0 & 0.37 \end{bmatrix}, \quad
\gamma^* \approx 0.24.
\]
The Lyapunov matrix $X = Q^{-1}$ has $\lambda_{\min}(X) \approx 0.294$,
$\mu \approx 0.12$ (where $\mu = \lambda_{\min}(-\mathcal{M})$ with $\mathcal{M}$ as
in \eqref{eq:BMI_raw}), and with initial conditions $x(0)=[0.5;0.5]^\top$,
$x(-1)=0$ (both $x(0)$ and the delay state $x(-1)$ are specified), the initial
Lyapunov value is $V_0 \approx 0.193$ (from \eqref{eq:V0_explicit}). The
level-set bound is $r = \sqrt{V_0/\lambda_{\min}(X)} \approx 0.810$.

The effective $\mathcal{L}_2$-gain from $\delta$ to $x$ under \emph{zero extended
initial conditions} ($x(0)=0$, $x(-1)=0$, i.e.\ $V_0=0$) is
$\gamma^*/\sqrt{\mu} \approx 0.24/\sqrt{0.12} \approx 0.693$; see part~(b) of
Theorem~\ref{thm:stability} and the scope clarification therein. For the non-zero
initial conditions $x(0)=[0.5;0.5]^\top$, $x(-1)=0$ used in the Lyapunov bound, the
ratio $\gamma^*/\sqrt{\mu}$ does not represent the $\ell_2/\ell_2$ gain.

Condition \eqref{eq:inv_condition} reads
\[
\|\Phi_\rho\|_{\mathrm{op}}\,r + \|CA_{d,P}\|_{\mathrm{op}}\,r
+ \delta_{\max}\|CD\|_{\mathrm{op}}
\approx 0.069 + 0.023 + 0.015 = 0.107 \;\leq\; \phi = 0.5,
\]
which is satisfied with margin.

\begin{remark}[Note on intermediate numerical values in condition \eqref{eq:inv_condition}]
\label{rem:num_intermediate}
The three terms $0.069$, $0.023$, and $0.015$ reported above are rounded values
consistent with the LMI solution and the state bound $r \approx 0.810$. The dominant
term $\|\Phi_\rho\|_{\mathrm{op}} \cdot r \approx 0.069$ corresponds to
$\|\Phi_\rho\|_{\mathrm{op}} \approx 0.085$ (spectral norm of $\Phi_\rho$ as defined in
Proposition~\ref{prop:reaching}(b)), multiplied by $r \approx 0.810$. Readers wishing
to reproduce these figures exactly should solve \eqref{eq:LMI_linear} with the code
given in Remark~\ref{rem:reproducibility}, recover $X = Q^{-1}$, compute
$r = \sqrt{V_0/\lambda_{\min}(X)}$ with $V_0 = x(0)^\top X x(0)$ (taking $x(-1)=0$),
and evaluate all spectral norms via \texttt{norm($\cdot$,2)} in MATLAB. Minor
discrepancies (of order $10^{-2}$) may arise from solver tolerances and rounding in the
reported $Q$, $\tilde{Y}$ values; the conclusion that condition \eqref{eq:inv_condition}
is satisfied with margin ($\text{LHS} \approx 0.107 \ll 0.5$) is robust to these
variations.
\end{remark}

\begin{remark}[Roots of the delayed characteristic equation]\label{rem:delayed_char}
The eigenvalues of $\bar{A}$ are $\{0, -0.4\}$, confirming Schur stability of the
delay-free reduced system $x(k+1)=\bar{A}x(k)$. For the full delayed system
$x(k+1) = \bar{A}x(k) + \bar{A}_d x(k-\tau)$ with $\tau=1$, stability is governed by
the roots of the characteristic polynomial
$\det(z^2 I - z\bar{A} - \bar{A}_d) = 0$ (Proposition~\ref{prop:spectral}). These
roots \emph{differ} from the eigenvalues of $\bar{A}$: the delay term $\bar{A}_d$
perturbs the spectrum, and $z = -0.4$ is in general not a root of the delayed
characteristic polynomial. The LMI feasibility result of Theorem~\ref{thm:stability}
guarantees stability of the \emph{delayed} system and is therefore the relevant
certificate here, not the eigenvalues of $\bar{A}$ alone.
\end{remark}

\begin{remark}[Reproducibility of numerical results]\label{rem:reproducibility}
The LMI values above were obtained by solving \eqref{eq:LMI_linear} with YALMIP/MOSEK.
All norm computations in the verification of condition \eqref{eq:inv_condition} use the
spectral (operator) norm \texttt{norm(...,2)} $= \sigma_{\max}(\cdot)$, consistent with
the norm convention declared in Section~\ref{sec:intro}. The code is as follows:
\begin{verbatim}
A_bar  = [-0.225, 0.0875; 0.450, -0.175];
Ad_bar = [-0.0125, 0.00625; 0.0250, -0.0125];
D_bar  = [0.025; -0.050];

n = 2; p = 1;
Q      = sdpvar(n, n, 'symmetric');
Ytilde = sdpvar(n, n, 'symmetric');
gamma2 = sdpvar(1);

% LMI (5.6): 7x7 system (2n+p+n = 7 with n=2, p=1)
LMI = [-(Q - Ytilde),   zeros(n),   zeros(n,p),   Q*A_bar';
        zeros(n),        -Ytilde,    zeros(n,p),  Q*Ad_bar';
        zeros(p,n),      zeros(p,n), -gamma2,      D_bar';
        A_bar*Q,         Ad_bar*Q,   D_bar,        -Q      ];

F = [LMI <= -1e-6*eye(2*n+p+n), Q >= 1e-6*eye(n), ...
     Ytilde >= 1e-6*eye(n), gamma2 >= 0];
optimize(F, gamma2, sdpsettings('solver','mosek','verbose',0));

Q_val   = value(Q);
Yt_val  = value(Ytilde);
gam_val = sqrt(value(gamma2));
X_val   = inv(Q_val);
Y_val   = X_val * Yt_val * X_val;

% Assemble M from (5.2) and compute mu and mu_0
M11 = A_bar'*X_val*A_bar   - X_val + Y_val;
M12 = A_bar'*X_val*Ad_bar;
M13 = A_bar'*X_val*D_bar;
M22 = Ad_bar'*X_val*Ad_bar - Y_val;
M23 = Ad_bar'*X_val*D_bar;
M33 = D_bar'*X_val*D_bar   - gam_val^2;
M_full = [M11, M12, M13; M12', M22, M23; M13', M23', M33];
mu_val  = min(eig(-M_full));
M0      = M_full(1:2*n,1:2*n);
mu0_val = min(eig(-M0));

% Effective L2 gain (ONLY valid under zero extended initial conditions,
% i.e. x(0)=0 and x(-1)=0 so that V_0=0; see Theorem 5.1(b))
eff_gain = gam_val / sqrt(mu_val);

% Compute V_0 with x(0)=[0.5;0.5], x(-1)=0 (non-zero init cond.)
x0 = [0.5; 0.5];  xm1 = [0; 0];
V0 = x0'*X_val*x0 + xm1'*Y_val*xm1;
r  = sqrt(V0 / min(eig(X_val)));

B_P = [-0.25; -0.50];  C_mat = [1, 0.5];
A_P = [-0.40, -0.05; 0.10, -0.45];
K   = [1.0, 0.5];  rho = 0.2;  phi = 0.5;
Phi_rho = C_mat*(A_P - B_P*K) - (rho/phi)*C_mat*B_P*C_mat;
CA_dP   = C_mat * (-0.025 * eye(2));
CD      = C_mat * [0.1; 0.1];
lhs_45  = norm(Phi_rho,2)*r + norm(CA_dP,2)*r + 0.1*norm(CD,2);
% lhs_45 approx 0.107 <= 0.5 = phi  [condition (4.5) verified]

fprintf('gamma* = %.4f, mu = %.4f, mu_0 = %.4f\n', gam_val, mu_val, mu0_val);
fprintf('Effective L2 gain (zero ext. init) = %.4f\n', eff_gain);
fprintf('V0 = %.4f, r = %.4f, cond (4.5) LHS = %.4f\n', V0, r, lhs_45);
\end{verbatim}
Independent verification: substitute the reported $Q$ into \eqref{eq:LMI_linear} and
check negative definiteness.
\end{remark}

\paragraph{Discussion.}
The non-trivial structure $\Pi \neq 0$ arises from the underactuated configuration
$m=1<n=2$. The singularity $\det(\bar{A})=0$ is a consequence of the rank-1 structure
of $\Pi$ and does not affect the stability analysis. The Rota-Baxter weight
$\lambda=0.5$ compresses the effective state matrices by $0.5$, improving LMI
feasibility. The effective $\mathcal{L}_2$-gain (under zero extended initial conditions)
is $\gamma^*/\sqrt{\mu} \approx 0.693$, which is larger than the bare parameter
$\gamma^* \approx 0.24$; this illustrates that $\gamma$ alone does not characterize
the disturbance attenuation level. For non-zero initial conditions ($V_0 > 0$),
this ratio does not represent the standard $\ell_2/\ell_2$ input-output gain.

\section{Conclusion}\label{sec:conclusion}
This paper established a rigorous framework linking Rota-Baxter operators, the matrix
$C^*$-algebra $M_n(\mathbb{C})$ (motivated by the discrete Toeplitz algebra, whose
role is clarified in Remark~\ref{rem:toeplitz_scope} as purely heuristic---no rigorous
infinite-dimensional reduction is performed), and sliding mode control of delayed
systems. We provided structural characterizations of Rota-Baxter operators in
$M_n(\mathbb{C})$, derived Lyapunov-based matrix inequality conditions ensuring robust
asymptotic stability via the Schur complement lemma, and supplied a fully linear
reformulation via the congruence transformation $Q = X^{-1}$
(Remark~\ref{rem:LMI_linear}). The algebraic invariance results
(Propositions~\ref{prop:commute} and~\ref{prop:lie_compat}) were stated under explicit
hypotheses and verified on the concrete examples of Section~\ref{sec:classification}.
The finite-dimensional setting was justified by working directly within $M_n(\mathbb{C})$.

The QSM analysis (Proposition~\ref{prop:reaching}) distinguishes the reaching phase
(condition \eqref{eq:rho_sufficient}) from band-invariance (condition
\eqref{eq:inv_condition}). The formula \eqref{eq:Tstar} provides an upper bound on the
reaching time; the actual reaching time may be earlier. The validity of $\alpha_0$ as a
uniform bound throughout the reaching phase is established in
Remark~\ref{rem:alpha0_validity} under the design procedure of Remark~\ref{rem:design}:
$r_0$ and $\rho_{\max}$ are fixed from open-loop data before $K$ and $\rho$ are chosen;
condition~\eqref{eq:r0_sufficient} is then verified \emph{after} $K$ is selected
(requiring $\|A_P - B_PK\|_{\mathrm{op}} < 1$, condition~\eqref{eq:K_strong}, which is
strictly stronger than Schur stability; see
Remarks~\ref{rem:K_strong_scalar}--\ref{rem:K_strong_example}). If
\eqref{eq:r0_sufficient} fails, $r_0$ is increased conservatively or $K$ is redesigned.
This is verified numerically in Section~\ref{sec:nondeg}. The consistency of the
two-phase bounds is established in Remarks~\ref{rem:consistency} and~\ref{rem:two_phase_logic}.

The Jacobi identity for the induced Lie bracket $[\cdot,\cdot]_P$ (Lemma~\ref{lem:lie})
is established via three cancellations: $S_1+S_2=0$ (Groups~1--2),
$G^{(1)}+H+G^{(2)}=0$ (Group~3, joint cancellation via the ordinary Jacobi identity
applied to the triple $(P(x)+\lambda x, y, z)$, with cyclic sum over the three even
permutations $(x,y,z)\to(y,z,x)\to(z,x,y)$, and with the identifications of the cyclic
sums $(*) = G^{(2)}/\lambda$ and $(**) = H/\lambda$ made explicit by re-labelling the
dummy cyclic variable), and $G^{(3)}=0$ (direct). The explicit computation of
Remark~\ref{rem:jacobi_correction} confirms that none of the three Group-3 terms
vanishes individually in general (for $P_+$, $x=e_{12}$, $y=e_{21}$, $z=e_{11}$:
$G^{(1)}/\lambda = e_{11}-e_{22} \neq 0$, $G^{(2)}/\lambda = e_{22}-e_{11} \neq 0$,
$H = 0$, with joint sum zero); only their combined sum vanishes.

The stability analysis of Theorem~\ref{thm:stability} applies to the equivalent control
system \eqref{eq:reduced} on the sliding manifold; band-invariance of the manifold under
the actual closed-loop law \eqref{eq:closed} is certified separately by
condition~\eqref{eq:inv_condition} of Proposition~\ref{prop:reaching}. The effective
$\mathcal{L}_2$-gain $\gamma/\sqrt{\mu}$ is valid under \emph{zero extended initial
conditions} ($x(0)=0$ and $x(-i)=0$ for $i=1,\ldots,\tau$); this is clarified in
the statement of Theorem~\ref{thm:stability}(b) and its proof. Proposition~%
\ref{prop:commute} provides explicit characterizations (not merely reformulations)
of condition \eqref{eq:commute} for each operator: universally for scalar scaling,
and as a necessary and sufficient condition on $C$ for the triangular projection $P_+$.
Proposition~\ref{prop:lie_compat} applies in practice only to the scalar scaling
operator among the examples of this paper; its scope is now clearly delimited in
Remark~\ref{rem:lie_scope}.

Future work includes:
\begin{itemize}
  \item Extending the results to infinite-dimensional Toeplitz algebras, incorporating
        proper spectral tools (Wiener-Hopf factorization, Fredholm~index), and providing
        a rigorous reduction from the Toeplitz algebra to finite-dimensional compressions.
  \item Incorporating time-varying delays $\tau(k)$ and stochastic uncertainties, with
        multiple delay-dependent Lyapunov matrices $Y_i$ for reduced conservatism
        (cf.\ Remark~\ref{rem:conservatism}).
  \item Extending the framework to the general rectangular case $m < n$ and to fat
        output maps $p < m$.
  \item Establishing pointwise ISS bounds from the Lyapunov decrease condition.
  \item Providing a non-degenerate $m=n$ example using a non-scalar Rota-Baxter
        operator (e.g., $P_+$ with a suitably structured system), including explicit
        verification of condition \eqref{eq:K_strong} for $P_+$ and verification that
        $\Pi$ preserves the relevant Lie subalgebra.
  \item Establishing that condition \eqref{eq:lie_hyp} and the $\Pi$-preservation
        hypothesis of Proposition~\ref{prop:lie_compat} hold for specific classes of
        system matrices and operators beyond scalar scaling.
  \item Experimental validation on networked control systems where memory and
        noncommutative interactions play a dominant role.
\end{itemize}

\bibliographystyle{plain}

\end{document}